\let\olddiv\div
\begin{document}



\let\goth\mathfrak


\def\myend{{}\hfill{\small$\bigcirc$}}

\newenvironment{ctext}{%
  \par
  \smallskip
  \centering
}{%
 \par
 \smallskip
 \csname @endpetrue\endcsname
}


\def\id{\mathrm{id}}

\newcommand{\msub}{\mbox{\large$\goth y$}}    

\def\LineOn(#1,#2){\overline{{#1},{#2}\rule{0em}{1,5ex}}}
\def\lines{{\cal L}}
\def\projgeo(#1){PG({#1})}

\def\VerSpace(#1,#2){{\bf V}_{{#2}}({#1})}

\def\VerSpacex(#1,#2){{\bf V}^\ast_{{#2}}({#1})}
\def\GrasSpace(#1,#2){{\bf G}_{{#2}}({#1})}
\def\vergras{{\goth R}}
\def\konftyp(#1,#2,#3,#4){\left( {#1}_{#2}\, {#3}_{#4} \right)}
\def\binokonf(#1){\konftyp({\binom{{#1}}{2}},{{#1}-2},\binom{{#1}}{3},3)}
\let\binoconf\binokonf

\newcount\liczbaa
\newcount\liczbab

\def\binkonfo(#1,#2){\liczbaa=#2 \liczbab=#2 \advance\liczbab by -2
\def\doa{\ifnum\liczbaa = 0\relax \else
\ifnum\liczbaa < 0 \the\liczbaa \else +\the\liczbaa\fi\fi}
\def\dob{\ifnum\liczbab = 0\relax \else
\ifnum\liczbab < 0 \the\liczbab \else +\the\liczbab\fi\fi}
\konftyp(\binom{#1\doa}{2},#1\dob,\binom{#1\doa}{3},3) }

\newcount\liczbac
\def\binkonf(#1,#2){\liczbac=#2 
\def\docc{\ifnum\liczbac = 0 \relax \else\ifnum\liczbac < 0\the\liczbac 
\else+\the\liczbac\fi\fi}
B_{#1{\docc}}}


\def\PSTS{{\sf PSTS}}
\def\BSTS{{\sf BSTS}}


\title[Hyperplanes of binomial \PSTS's]{%
Binomial partial Steiner triple systems with complete graphs:
structural problems}

\author{Krzysztof Petelczyc,  %
Ma{\l}gorzata Pra{\.z}mowska, %
Krzysztof Pra{\.z}mowski}



\maketitle

\section*{Introduction}

In the paper we study the structure of hyperplanes of so called binomial partial Steiner triple
systems (\BSTS's, in short)
i.e. of configurations with $\binom{n}{2}$ points and $\binom{n}{3}$ lines, 
each line of the size $3$. Consequently, a \BSTS\ has $n-2$ lines through each of its points.

The notion of a hyperplane is commonly used within widely understood geometry.
Roughly speaking, a hyperplane of a (geometrical) space $\goth M$ is a maximal proper subspace of
$\goth M$. A more specialized characterization of a (``geometrical'') hyperplane comes from 
projective geometry: a hyperplane of a (partial linear = semilinear) space $\goth M$
is a proper subspace of $\goth M$ which crosses every line of $\goth M$.
Note that these two characterizations are not equivalent in general.
In the context of incidence geometry the second characterization is primarily used
(cf. \cite{veldkamp} or \cite{ronan}),
and also in our paper in investigations on some classes of partial Steiner triple systems 
(in short: \PSTS's)  we shall follow this approach.
For a \PSTS\ $\goth M$ there is a natural structure of a projective space with all the lines 
of size $3$ definable on the family of all hyperplanes of $\goth M$
(the so called Veldkamp space of $\goth M$).
On other side our previous investigations on \PSTS's and graphs contained in them lead us to characterizations
of systems which freely contain complete graphs (one can say, informally and not really exactly:
systems freely generated by a complete graph);
these all fall into the class of so called binomial configurations i.e. 
$\left( {\binom{\nu+\varkappa-1}{\nu}}_{\nu}\; {\binom{\nu+\varkappa-1}{\varkappa}}_{\varkappa} \right)$-configurations 
with integers $\nu,\varkappa\geq 2$.
A characterization of \PSTS's which freely contain at least given number $m$ of complete subgraphs
appeared available, and for particular values of $m$ a complete classification of the resulting
configurations was proved (see \cite{STP-4}).
It turned out so, that the structure of complete subgraphs of $\goth M$ says much about $\goth M$,
but fairly it does not determine $\goth M$.

Now, quite surprisingly, we have observed that the complement of such a free complete subgraph of
a \PSTS\ $\goth M$ is a hyperplane of $\goth M$.
So, our previous classification is equivalent to 
characterizations and classifications of binomial \PSTS's based on the structure of
their binomial hyperplanes.
But a \PSTS, if contains a binomial hyperplane, usually contains also other (non-binomial)
hyperplanes. So, the structure of all the hyperplanes of a \PSTS\ $\goth M$ says much more 
about the structure of $\goth M$.
In the paper we have determined the structure of hyperplanes of \PSTS's of some important
classes, in particular of so called generalized Desargues configurations 
(cf. \cite{doliwa1}, \cite{doliwa2}, \cite{perspect}, \cite{saniga}),
of combinatorial Veronese structures and of dual combinatorial Veronese structures, both with 
3-element lines (cf. \cite{combver}, \cite{veradjac}), and of so called combinatorial 
quasi Grassmannians (cf. \cite{skewgras}). Exact definitions of respective classes of
configurations are quoted in the text. 
Beautiful figures illustrating the schemes of hyperplanes in small \PSTS's were prepared by Krzysztof Petelczyc.
We have also shown a general method to characterize
all the hyperplanes in an arbitrary \BSTS\ with at least one ((maximal) free complete subgraph
(Theorems \ref{prop:hipinSSP:0}, \ref{prop:hipinSSP:2}).

As it was said: the hyperplanes of a \PSTS\ yield a projective space $\goth P$.
In essence, ${\goth P} = \projgeo(n,2)$ for some integer $n$, so only 
$n = \dim({\goth P})$ is an important parameter, but non-isomorphic \PSTS's may have
the same number $2^{n+1}-1$ of hyperplanes. Consequently, the projective space of hyperplanes
of a binomial \PSTS\ $\goth M$ does not give a complete information on the geometry of $\goth M$.

However, if the points of the $\projgeo(n,2)$, associated  with a \BSTS, are labelled by the
type of geometry that respective hyperplanes carry, the number 
of nonisomorphic realizations of such labelled spaces drastically decreases.
It is pretty well seen in the case of $10_3$-configurations, but one can 
observe it for all \BSTS\ with arbitrary rank of points.

\section{Binomial subspaces of a \BSTS}

A {\em partial Steiner triple system} (a \PSTS) is a partial linear space 
${\goth M} = \struct{S,\lines}$ with the constant point rank
and all the lines of the size $3$.
A {\em binomial partial Steiner triple system} (a \BSTS) is a configuration
of the type $\binkonfo(n,0)$ for an integer $n\geq 4$; for short, we write
$\binkonf(n,0)$ for a configuration with these parameters.
\par
The symbols $\sub(X)$ and $\sub_k(X)$ stand for the subsets and the $k$-subsets
of a set $X$, resp.

\subsection{The structure of maximal free subgraphs}\label{ss:gen}

A \PSTS\ $\goth M$ {\em freely contains the complete graph} $K_X$, $X \subset S$
iff for any disjoint $2$-subsets $\{ a_1,a_2 \}$ and $\{ b_1,b_2 \}$ of $X$ we have
  $\LineOn(a_1,a_2) \cap \LineOn(b_1,b_2) = \emptyset$
($\overline{a}$ denotes the line of $\goth M$ which contains $a$)
and no $3$-subset of $X$ is on a line of $\goth M$.

Let us recall after \cite{klik:binom}
some basic properties of \BSTS's.
\begin{prop}\label{minSTS2pelny}
  Let $n\geq 2$ be an integer.
  A smallest \PSTS\ that freely contains the complete graph $K_n$ is a 
  $\binkonf(n,+1)$-configuration.
  Consequently, it is a \BSTS.
\end{prop}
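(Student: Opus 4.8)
The plan is to pin down the minimal configuration by first counting the points that freeness forces, and only then determining the lines that the constant point rank of a \PSTS\ forces on top of them. Write $X=\{a_1,\dots,a_n\}$ for the vertex set of the freely contained $K_n$. For each $2$-subset $\{a_i,a_j\}$ the defining condition supplies a line $\LineOn(a_i,a_j)$, and since lines have size $3$ this line carries a third point $c_{ij}$; the clause forbidding $3$-subsets of $X$ on a line gives $c_{ij}\notin X$. Using that $\goth M$ is partial linear together with the disjointness clause, I would check the $c_{ij}$ are pairwise distinct by cases: two lines through a common $a_i$ meet only in $a_i$, and two lines on disjoint pairs do not meet at all, so any coincidence $c_{ij}=c_{kl}$ collapses one of the two lines and contradicts the definition. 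Hence $\goth M$ contains at least the $n+\binom{n}{2}=\binom{n+1}{2}$ distinct points of $X\cup\{c_{ij}\}$.

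Next I would read off the ranks in the structure built so far. Each $a_i$ already lies on the $n-1$ lines $\LineOn(a_i,a_j)$, $j\neq i$, so the constant point rank is at least $n-1$, and minimality makes me take it to be exactly $n-1$. Each $c_{ij}$ so far lies on a single line and must therefore acquire $n-2$ further lines. The decisive observation is that a new line through $c_{ij}$ can pass through no $a_k$, for $a_k$ already attains rank $n-1$ and a further line would push it to $n$; and minimality forbids introducing fresh points. Thus every additional line lies inside $\{c_{ij}\}$, and completing each $c_{ij}$ to rank $n-1$ forces the ``triangle'' lines $\{c_{ij},c_{ik},c_{jk}\}$ attached to the $\binom{n}{3}$ triples $\{i,j,k\}$.

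Then the parameter count is immediate: a configuration with $3$-element lines on $\binom{n+1}{2}$ points of constant rank $n-1$ has $\tfrac13\binom{n+1}{2}(n-1)=\binom{n+1}{3}$ lines, which are exactly the parameters of $\binkonf(n,+1)$, a \BSTS. To certify that these parameters are genuinely realised, I would exhibit the canonical model on the index set $[n+1]$: points are the $2$-subsets, lines the $3$-subsets, with a $2$-subset incident to every $3$-subset containing it. The assignment $a_i\mapsto\{i,n+1\}$, $c_{ij}\mapsto\{i,j\}$ identifies the construction above with this model, and I would verify directly that it is a \PSTS\ (each $3$-subset carries three $2$-subsets; each $2$-subset sits in $n-1$ triples; any two $2$-subsets lie in at most one common $3$-subset) and that it freely contains $K_n$ on $X=\{\{i,n+1\}\}$ (three such points have a $4$-element union, so none lie in one triple, and disjoint pairs yield disjoint lines).

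The only genuinely delicate step is the middle one: ensuring that minimality really does stop the free generation from spilling over into new points. The naive free process, upon creating the new pairs $\{c_{ij},c_{kl}\}$, would be tempted to attach yet more third points and never terminate; the whole content of \emph{smallest} is that the rank of the $a_i$ is already saturated at $n-1$, so no further line may touch $X$, whence the $c$-points must instead be joined among themselves — which is possible precisely because there are just enough of them to carry the triangle completion without leaving $\binom{n+1}{2}$ points. I expect the careful write-up of this saturation argument to be the crux; the point count and the explicit $[n+1]$-model are then routine verifications.
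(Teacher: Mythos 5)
The paper itself gives no proof of this proposition --- it is recalled from \cite{klik:binom} --- so there is nothing internal to compare against; judged on its own, your argument is essentially sound and follows the natural route. The freeness conditions force $n+\binom{n}{2}=\binom{n+1}{2}$ pairwise distinct points (your case analysis for the distinctness of the $c_{ij}$, splitting on whether the index pairs meet, is exactly right), the constant point rank is then at least $n-1$, double counting incidences gives at least $\tfrac13(n-1)\binom{n+1}{2}=\binom{n+1}{3}$ lines, and the model $\GrasSpace(n+1,2)$ shows both bounds are attained, so any smallest example is a $\binkonf(n,+1)$-configuration. One sub-claim, however, is false and should be deleted: completing each $c_{ij}$ to rank $n-1$ does \emph{not} force the triangle lines $\{c_{ij},c_{ik},c_{jk}\}$. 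The lines added among the $c$-points may form \emph{any} $\binkonf(n,0)$-configuration on the set of $c$'s --- this is precisely the content of Proposition \ref{pelny2horyzont} and of the $K_X+^\mu{\goth V}$ construction of Section \ref{sec:withgraf}, and already for $n=5$ there are ten non-isomorphic choices of the complement (Section \ref{ssec:10-3}). Your proof survives because the proposition only concerns the configuration type and your line count is pure double counting, independent of which lines among the $c$-points are chosen; but as written the ``forcing'' sentence asserts a uniqueness that contradicts the rest of the paper. A second, cosmetic point: the appeals to minimality (rank exactly $n-1$, no fresh points) are only licensed once a model attaining the bounds is exhibited, so logically the existence of the $\GrasSpace(n+1,2)$ realization should precede, or at least be cited by, the saturation step rather than appear as an afterthought.
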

\begin{prop}\label{pelny2horyzont}
  Let ${\goth M} = \struct{S,\lines}$ be a minimal \PSTS\ which freely contains a complete
  graph $K_X = \struct{X,{\sub_2(X)}}$ and $|X| = n$. 
  Then {\em the complement of $K_X$}, i.e. the structure
  \begin{equation}
    {\goth M}\setminus X :=
    \struct{S\setminus X,\lines\setminus\{ \overline{e}\colon e\in{\sub_2(X)} \}}
  \end{equation}
  is a
  $\binkonf(n,0)$-configuration and a subspace of $\goth M$.
  \par
  Conversely, let $\goth M$ contain as a subspace a
    $\binkonf(n,0)$-configuration ${\goth N} = \struct{Z,{\cal G}}$,
  Then $S\setminus Z$ yields in $\goth M$ a complete $K_n$-graph
  freely contained in $\goth M$, 
  whose complement is $\goth N$.
\end{prop}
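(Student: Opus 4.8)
The plan is to lean on Proposition~\ref{minSTS2pelny}, which tells us that the minimal $\goth M$ is a $\binkonf(n,+1)$-configuration: every point has rank $n-1$, every line has three points, $|S|=\binom{n+1}{2}$, and $|\lines|=\binom{n+1}{3}$. For the direct part I would first partition the points. Writing $X'$ for the set of ``third points'' $\overline{e}\setminus e$ with $e\in\sub_2(X)$, free containment forces the map $e\mapsto\overline{e}\setminus e$ to be injective (coincident third points would place two edge-lines through a common point, or two lines through a common pair) and forces $X'\cap X=\emptyset$ (a third point lying in $X$ would give three vertices on a line). Hence $S=X\cup X'$ disjointly with $|X'|=\binom{n}{2}$, so $S\setminus X=X'$ already has the right number of points for $\binkonf(n,0)$.

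The crux of the direct part is the dichotomy $|\ell\cap X|\in\{0,2\}$ for every line $\ell$. To get it I would fix a vertex $a\in X$: the $n-1$ edges $\{a,b\}$ of $K_X$ at $a$ yield $n-1$ pairwise distinct lines $\overline{\{a,b\}}$ through $a$ (distinctness again from ``no three vertices collinear''), and since $a$ has rank exactly $n-1$ these are \emph{all} lines through $a$. Thus any line meeting $X$ meets it in exactly two points, the value $|\ell\cap X|=1$ being impossible; this is the one step where both the precise rank $n-1$ and the definition of free containment are genuinely used. Consequently the edge-lines $\overline{e}$, $e\in\sub_2(X)$, are precisely the lines not contained in $S\setminus X$, which shows at once that $S\setminus X$ is a subspace and that $\lines\setminus\{\overline{e}\colon e\in\sub_2(X)\}$ is exactly its induced line set. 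A short count then supplies the remaining parameters: $\binom{n+1}{3}-\binom{n}{2}=\binom{n}{3}$ lines by Pascal, each $p\in X'$ lies on a unique edge-line so its rank drops from $n-1$ to $n-2$, and line size $3$ is inherited, i.e.\ $\goth M\setminus X$ is a $\binkonf(n,0)$-configuration.

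For the converse I would classify the lines of $\goth M$ by the pair $(|\ell\cap Y|,|\ell\cap Z|)$, where $Y:=S\setminus Z$, so $|Y|=\binom{n+1}{2}-\binom{n}{2}=n$. The subspace property of $\goth N$ instantly kills type $(1,2)$: a line with two points in $Z$ lies in $Z$. Since $\goth N$ is a $\binkonf(n,0)$-configuration, its $\binom{n}{3}$ lines are exactly the type $(0,3)$ lines, and Pascal's identity $\binom{n+1}{3}=\binom{n}{3}+\binom{n}{2}$ forces the remaining $\binom{n}{2}$ lines to be all of type $(2,1)$ with none of type $(3,0)$; equivalently, no three points of $Y$ are collinear. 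A rank count then fixes the structure: each $z\in Z$ has rank $n-2$ in $\goth N$ and $n-1$ in $\goth M$, hence lies on exactly one line meeting $Y$, giving a bijection between $Z$ and the type $(2,1)$ lines; dually each $y\in Y$ meets only type $(2,1)$ lines, giving a bijection between $\sub_2(Y)$ and those same lines. These two bijections are the heart of the matter: the first shows distinct type $(2,1)$ lines carry distinct points of $Z$, so the lines on two disjoint pairs of $Y$ share neither a $Y$-point nor a $Z$-point and are therefore disjoint — exactly the free-containment condition — while the second identifies $Y$ with the vertex set of a freely contained $K_n$. Finally, since the removed edge-lines are precisely the type $(2,1)$ lines, the complement of $K_Y$ retains exactly the type $(0,3)$ lines, i.e.\ ${\cal G}$, so its complement is $\goth N$.

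I expect the genuine obstacle to be the single step $|\ell\cap X|\neq1$ in the direct part, together with its mirror in the converse, the ``exactly one line to $Y$ through each point of $Z$'' count. Everything else is bookkeeping with binomial coefficients, but these are the two places where the exact rank $n-1$ of $\goth M$ and the definition of free containment must be combined, and where an off-by-one would silently destroy the bijections on which the whole argument rests.
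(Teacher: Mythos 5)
Your proof is correct, and there is nothing in the paper to compare it against: Proposition~\ref{pelny2horyzont} is stated without proof, being recalled from \cite{klik:binom} as a known fact. Taken on its own terms, your argument is complete. The direct part correctly isolates the two essential uses of the hypotheses --- injectivity of $e\mapsto\overline{e}\setminus e$ (which needs both clauses of free containment, for disjoint and for overlapping pairs separately) and the exclusion of $|\ell\cap X|=1$ via the rank count at a vertex of $X$ --- and the rest is indeed bookkeeping. In the converse the only presentational slip is the sentence claiming that Pascal's identity alone forces all $\binom{n}{2}$ non-$\cal G$ lines to be of type $(2,1)$; by itself it only gives the total count, and the exclusion of type $(3,0)$ lines comes from the incidence count you state immediately afterwards (each $z\in Z$ lies on exactly one line meeting $Y$, each type-$(2,1)$ line carries exactly one point of $Z$, hence there are already $|Z|=\binom{n}{2}$ lines of type $(2,1)$ and no room for any of type $(3,0)$). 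I would reorder those two sentences, but the logic is all there. One further small point worth making explicit: in the converse you silently use that $\goth M$ is a $\binkonf(n,+1)$-configuration (for $|S|$ and the point rank $n-1$); this is the intended reading of the statement, since the converse refers to the same minimal $\goth M$, but it deserves a word.
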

\begin{prop}\label{prop:cross-compl2}
  Any two distinct complete $K_n$-graphs freely contained in a \linebreak 
  $\binkonf(n,+1)$-configuration
  share exactly one vertex.
\end{prop}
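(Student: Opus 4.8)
The plan is to prove the two inequalities $|X\cap Y|\le 1$ and $|X\cap Y|\ge 1$ separately, where $K_X,K_Y$ are the two freely contained complete graphs and I abbreviate $t:=|X\cap Y|$. Everything rests on a local observation which is essentially Proposition~\ref{pelny2horyzont} read pointwise: in a $\binkonf(n,+1)$-configuration $\goth M$ every point has rank exactly $n-1$, so if $p\in X$ then the $n-1$ edges $\overline{p\,x}$ with $x\in X\setminus\{p\}$ (pairwise distinct, since a coincidence would force three vertices of $X$ onto one line) already exhaust all lines through $p$. Two consequences I would record as a lemma: (a) \emph{every} line of $\goth M$ meets $X$ in $0$ or $2$ points; and (b) the $\binom{n}{2}$ third points of the edges of $K_X$ are pairwise distinct and lie off $X$, hence fill $S\setminus X$ exactly, each point of $S\setminus X$ being the join point of a unique edge --- this is just the complement description of Proposition~\ref{pelny2horyzont}. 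The same statements hold verbatim for $Y$.

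For $t\le 1$ I would count \emph{split} lines. Fix a common vertex $a\in X\cap Y$. Among the $n-1$ lines through $a$, exactly $t-1$ join $a$ to another common vertex, and I call the remaining $n-t$ lines split: such a split line is $\{a,p,q\}$ with $p\in X\setminus Y$ its unique further $X$-point and $q\in Y\setminus X$ its unique further $Y$-point (uniqueness by (a)). A split line carries exactly one common vertex, so summing over the $t$ choices of $a$ counts each split line once, giving $t(n-t)$ split lines in total. On the other hand, viewing a split line $\{a,p,q\}$ as the edge $\overline{a\,p}$ of $K_X$, its join point is $q\in Y\setminus X$; by (b) distinct edges have distinct join points, so this assignment is an injection into $Y\setminus X$, a set of size $n-t$. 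Hence $t(n-t)\le n-t$, and since $X\neq Y$ forces $n-t>0$, we conclude $t\le 1$.

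For $t\ge 1$ I would argue by contradiction, assuming $X\cap Y=\emptyset$. Pick any $y\in Y$. Then $y\in S\setminus X$, so by (b) it is the join point of some edge of $K_X$, i.e.\ it lies on a line $\{x_1,x_2,y\}$ with $x_1,x_2\in X$. But $y$ is a vertex of $K_Y$, so by the local observation applied to $Y$ every line through $y$ is an edge of $K_Y$ and therefore, by (a) for $Y$, contains two vertices of $Y$. The line $\{x_1,x_2,y\}$ contains only $y$ from $Y$, since $x_1,x_2\in X$ and $X\cap Y=\emptyset$ --- a contradiction. Thus $t\ge 1$, and combined with the previous step $t=1$.

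The only delicate part is the bookkeeping in the split-line count --- making sure each split line is charged to exactly one common vertex and that the map to join points is injective --- together with the clean extraction of properties (a) and (b) from the rank condition and Proposition~\ref{pelny2horyzont}. Once these are in place both bounds are immediate, and I would stress that the whole argument uses only the point rank $n-1$ and the complement description, never an explicit coordinatization of $\goth M$ by $2$- and $3$-subsets.
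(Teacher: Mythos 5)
This proposition is one of the facts the paper merely \emph{recalls} from \cite{klik:binom} (``Let us recall after \cite{klik:binom} some basic properties of \BSTS's''), so there is no in-paper proof to compare yours against; I can only judge your argument on its own terms, and it checks out. Your preliminary observations are sound: in a $\binkonf(n,+1)$-configuration every point has rank $n-1$, so for $p\in X$ the $n-1$ lines $\LineOn(p,x)$, $x\in X\setminus\{p\}$ (pairwise distinct because a coincidence would put three vertices of $X$ on a line, and two distinct lines of a partial linear space share at most one point) exhaust the pencil at $p$; this gives (a), and (b) then follows from the free-containment condition together with the cardinality count $|S\setminus X|=\binom{n+1}{2}-n=\binom{n}{2}$, which is exactly the complement description of \ref{pelny2horyzont}. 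The double count of split lines is the one place needing care and you handle it correctly: a line through a common vertex $a$ meets each of $X$ and $Y$ in exactly two points, so it is either a common edge or of the form $\{a,p,q\}$ with $p\in X\setminus Y$, $q\in Y\setminus X$; such a line carries exactly one point of $X\cap Y$, so the sum over common vertices counts each split line once, giving $t(n-t)$; and the map sending a split line to the third point of its underlying $X$-edge is injective into $Y\setminus X$ by (b), whence $t(n-t)\le n-t$ and $t\le 1$ since $t<n$. The lower bound via (b) applied to any $y\in Y\subseteq S\setminus X$ is clean. One cosmetic remark: your first step actually proves more than stated --- the split lines realize a bijection-like bound that, combined with \ref{prop:cross-compl-line}, is the germ of the ``systems of perspectives'' picture in Section \ref{sec:withgraf} --- but as written the argument is complete and correct.
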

\begin{prop}\label{prop:cross-compl-line}
  Let $\struct{X_i,\sub_2(X_i)}$, $i=1,2,3$ be three distinct $K_n$ graphs 
  freely contained in a $\binkonf(n,+1)$-configuration $\goth M$. Let 
  $c_k\in X_i\cap X_j$ for all $\{k,i,j\} = \{1,2,3\}$.
  Then $\{c_1,c_2,c_3\}$ is a line of $\goth M$.
\end{prop}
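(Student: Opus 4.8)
The plan is to prove the collinearity using only two ingredients: Proposition~\ref{prop:cross-compl2}, which says that two distinct freely contained $K_n$'s meet in a single vertex, and the structural fact that \emph{every line of $\goth M$ meets a freely contained complete graph $X$ in exactly $0$ or $2$ points}. The latter I would read off from freeness together with Proposition~\ref{pelny2horyzont}: no $3$-subset of $X$ lies on a line, so the count is never $3$, while the complement ${\goth M}\setminus X$ being a subspace forbids a line from crossing $X$ in exactly one point. With the labelling of the hypothesis we have $c_3\in X_1\cap X_2$, $c_2\in X_1\cap X_3$ and $c_1\in X_2\cap X_3$, and by Proposition~\ref{prop:cross-compl2} each of these intersections consists of a single point.

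First I would check that $c_1,c_2,c_3$ are pairwise distinct. If, say, $c_2=c_3$, then this point lies in $X_1\cap X_2\cap X_3$, and since $X_2\cap X_3=\{c_1\}$ all three points coincide in a common vertex $c$ of $X_1,X_2,X_3$. To rule this out, recall that the point rank of a $\binkonf(n,+1)$-configuration is $n-1$, so exactly $n-1$ lines pass through $c$; by the ``$0$-or-$2$'' rule each of them meets a given $X_i$ in $c$ together with exactly one further vertex, and as $X_i$ has $n-1$ vertices besides $c$ this is a bijection between the lines through $c$ and the vertices of $X_i\setminus\{c\}$. Hence on each line through $c$ the two points other than $c$ would have to accommodate one vertex from each of $X_1,X_2,X_3$; by pigeonhole two of these vertices coincide, producing a second common point of some $X_i\cap X_j$ and contradicting Proposition~\ref{prop:cross-compl2}.

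The heart of the argument is then short. Since $c_2,c_3\in X_1$ are distinct and $X_1$ is complete, they lie on a unique line $\ell$, whose third point I call $d$. Applying the ``$0$-or-$2$'' rule to $X_2$: the line $\ell$ contains $c_3\in X_2$, while $c_2\notin X_2$ (otherwise $c_2\in X_1\cap X_2=\{c_3\}$ and $c_2\in X_2\cap X_3=\{c_1\}$ would force $c_1=c_2=c_3$); thus $\ell$ meets $X_2$ in exactly two points, so $d\in X_2$. Symmetrically, using $c_2\in X_3$ and $c_3\notin X_3$, the same rule applied to $X_3$ gives $d\in X_3$. Therefore $d\in X_2\cap X_3=\{c_1\}$, i.e.\ $d=c_1$, and $\ell=\{c_1,c_2,c_3\}$ is a line of $\goth M$, as required.

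The step I expect to be the real obstacle is the distinctness argument of the second paragraph: the collinearity itself is a two-line computation once the ``$0$-or-$2$'' rule is in hand, but excluding a common vertex of the three graphs is where one must invoke Proposition~\ref{prop:cross-compl2} a second time in a slightly less obvious, counting form. Everything else is bookkeeping of the membership relations among $c_1,c_2,c_3$ and the $X_i$.
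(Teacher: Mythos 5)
Your proof is correct. There is, however, nothing in the paper to compare it against: Proposition \ref{prop:cross-compl-line} is only \emph{recalled} from \cite{klik:binom}, together with \ref{minSTS2pelny}--\ref{prop:cross-compl2}, and no proof is given in the text. Your derivation from those recalled facts is sound and self-contained. One small point of hygiene: the ``$0$-or-$2$'' rule is not really a consequence of the complement being a \emph{subspace} (a line with exactly one point in $X$ would not violate closure of $S\setminus X$ under joins); what you actually need is the stronger assertion of \ref{pelny2horyzont} that $S\setminus X$ with the line set $\lines\setminus\{\overline{e}\colon e\in\sub_2(X)\}$ is itself a $\binkonf(n,0)$-configuration, so every line not of the form $\overline{e}$, $e\in\sub_2(X)$, has all three of its points outside $X$. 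Equivalently, and perhaps more directly, one can count: a point $x\in X$ has rank $n-1$ in a $\binkonf(n,+1)$-configuration, and the $n-1$ pairwise distinct lines $\LineOn(x,y)$, $y\in X\setminus\{x\}$, already exhaust them, so no further line passes through $x$. With the rule secured, both halves of your argument go through: the pigeonhole step (the two non-$c$ points of any line through a hypothetical common vertex $c$ cannot host one representative of each $X_i\setminus\{c\}$ without forcing a second point in some $X_i\cap X_j$, against \ref{prop:cross-compl2}) and the two applications of the rule to $\ell=\LineOn(c_2,c_3)$ identifying its third point with $c_1$.
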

%

\subsection{Algebra of hyperplanes}

\def\hipy{{\mathscr H}}
\def\hipcap{\pitchfork}
\def\hipa{{\cal H}}
\def\syminus{\olddiv}
\def\VSpace(#1){{\mbox{\boldmath$V$}}(#1)}

Let ${\cal Z}_1,{\cal Z}_2$ be two subsets of a set $S$.
We write (cf. \cite{semipap})
\begin{eqnarray}
  {\cal Z}_1\hipcap{\cal Z}_2 & := &
	 ({\cal Z}_1\cap{\cal Z}_2) \cup 
	 \big((S\setminus {\cal Z}_1)\cap(S\setminus{\cal Z}_2 )\big)
  \\
  \strut & \;= & 
    S\setminus ({\cal Z}_1  \syminus {\cal Z}_2),
\end{eqnarray}
where $\syminus$ denotes the operation of symmetric difference.
Note that identifying a subset $\cal Y$ of $S$ 
with its characteristic function $\chi_{\cal Y}$,
and, consequently,  identifying $S$ with the constant function $\bf 1$ we can compute simply 
  $S\setminus {\cal Y} = {\bf 1}+{\cal Y}$. 
After that we have 
  ${\cal Y}_1\hipcap{\cal Y}_2 = {\bf 1}+({\cal Y}_1+{\cal Y}_2)$.
Simple computations in the $Z_2$-algebra of characteristic functions of subsets of $S$
yield immediately the following equations valid for arbitrary subsets 
${\cal Y},{\cal Y}_1,{\cal Y}_2$ of $S$:
\begin{eqnarray}
   {\cal Y}\hipcap {\cal Y} & = & S,
   \\
   {\cal Y}\hipcap S & = & {\cal Y},
   \\
   {\cal Y}_1\hipcap {\cal Y}_2 & = & {\cal Y}_2\hipcap {\cal Y}_1,
   \\
   {\cal Y}_1\hipcap({\cal Y}_1\hipcap {\cal Y}_2) & = & {\cal Y}_2,
   \\
   ({\cal Y}_1\hipcap {\cal Y}_2) \cap {\cal Y}_2 & = & {\cal Y}_1 \cap {\cal Y}_2,
   \\
   ({\cal Y}\hipcap {\cal Y}_1)\hipcap({\cal Y}\hipcap {\cal Y}_2) & = & {\cal Y}_1\hipcap {\cal Y}_2,
\end{eqnarray}
Formally, the operation $\hipcap$ depends on the superset $S$ which contains the arguments of $\hipcap$.
In what follows we shall frequently use this operation without fixing $S$ explicitly:
the role of $S$ will be seen from the context.
\par
A {\em hyperplane of a \PSTS}\ $\goth M$ is an arbitrary proper subspace of $\goth M$ which crosses
every line of $\goth M$.
\begin{prop}\label{prop:veldkam}
  If $H_1,H_2$ are distinct hyperplanes of $\goth M$ then 
  $H_1\hipcap H_2$ is a hyperplane of $\goth M$ as well.
\end{prop}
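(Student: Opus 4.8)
The plan is to reduce the notion of hyperplane to a single \emph{parity} condition on lines, after which the claim collapses to a one-line computation in the $Z_2$-algebra set up above. First I would record the reformulation: since $\goth M$ is a \PSTS, every line $L$ has exactly three points, so a subset $Y\subseteq S$ is a subspace exactly when no line meets $Y$ in precisely two points, i.e. $|L\cap Y|\neq 2$ for every $L$; and $Y$ crosses every line exactly when $|L\cap Y|\geq 1$. Combining these, $Y$ is a subspace crossing every line if and only if $|L\cap Y|\in\{1,3\}$ for every $L$, i.e. if and only if $|L\cap Y|$ is odd, i.e. if and only if $\chi_Y(a)+\chi_Y(b)+\chi_Y(c)=1$ in $Z_2$ for each line $L=\{a,b,c\}$.

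With this characterization in hand I would apply it to $H:={H_1}\hipcap{H_2}$. Using the identity ${\cal Y}_1\hipcap{\cal Y}_2={\bf 1}+({\cal Y}_1+{\cal Y}_2)$ already noted, one has $\chi_H={\bf 1}+\chi_{H_1}+\chi_{H_2}$. Fixing a line $L=\{a,b,c\}$ and summing the three point-values over $Z_2$ gives
\[
  \sum_{x\in L}\chi_H(x)
  \;=\; 3\cdot 1 + \sum_{x\in L}\chi_{H_1}(x) + \sum_{x\in L}\chi_{H_2}(x)
  \;=\; 1+1+1 \;=\; 1 ,
\]
where the two inner sums each equal $1$ because $H_1$ and $H_2$ are themselves hyperplanes. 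By the reformulation this shows at once that $H$ is a subspace of $\goth M$ crossing every line; in particular $H$ meets every line and is therefore nonempty.

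It remains to verify that $H$ is \emph{proper}, i.e. $H\neq S$, and this is where the distinctness hypothesis enters: $H=S$ would force $\chi_H={\bf 1}$, hence ${\bf 1}+\chi_{H_1}+\chi_{H_2}={\bf 1}$, hence $\chi_{H_1}=\chi_{H_2}$ and $H_1=H_2$, contrary to assumption. Thus $H$ is a proper subspace crossing every line, i.e. a hyperplane, which completes the argument.

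I expect essentially all the content to sit in the parity reformulation of the first paragraph; once ``hyperplane'' is recognized as ``odd intersection with every line'', stability under $\hipcap$ is forced, because the constant summand ${\bf 1}$ contributes $3\equiv 1\pmod 2$ on each line while the two hyperplane contributions cancel. The computation itself poses no obstacle, and the distinctness assumption is used, and is needed, solely to guarantee properness.
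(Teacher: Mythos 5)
Your proof is correct, and it takes a genuinely different (and arguably cleaner) route than the paper's. The paper proves the proposition by enumerating, up to symmetry, the possible intersection patterns of a line $L=\{q_1,q_2,q_3\}$ with $H_1$ and $H_2$ (four cases), and checking in each case that $L$ meets $H_1\hipcap H_2$ and that two points of $L$ in $H_1\hipcap H_2$ force $L\subset H_1\hipcap H_2$. You instead isolate the parity reformulation --- $Y$ is a subspace crossing every line iff $|L\cap Y|$ is odd for each line $L$ --- and then the closure of hyperplanes under $\hipcap$ falls out of the identity $\chi_{H_1\hipcap H_2}={\bf 1}+\chi_{H_1}+\chi_{H_2}$ by a one-line computation mod $2$. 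This exploits precisely the $Z_2$-algebra of characteristic functions that the paper sets up immediately before the proposition but then does not use in the proof itself; your argument makes that machinery do the work, avoids the case enumeration (and any worry about its exhaustiveness), and scales to iterated $\hipcap$'s without extra effort. A further point in your favour: you explicitly verify properness of $H_1\hipcap H_2$, observing that $H_1\hipcap H_2=S$ would force $H_1=H_2$; this is exactly where distinctness is needed, and the paper's proof passes over it in silence. The only mild cost of your approach is that the parity reformulation must be stated and justified, but that is a two-line observation for a \PSTS, where every line has exactly three points.
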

\begin{proof}
  Let $L$ be a line of $\goth M$. Set $H = H_1\hipcap H_2$.
  Write $L = \{ q_1,q_2,q_3 \}$. It is seen that, up to a numbering of variables, one of the following
  must occur:
  \begin{sentences}\itemsep-2pt
  \item
    $q_1 \in H_1,H_2$, $q_2,q_3\notin H_1\cup H_2$: then $L\subset H$.
  \item
    $q_1\in H_1,H_2$, $q_2,q_3\in H_1\setminus H_2$.
  \item
    $q_1,q_2,q_3\in H_1,H_2$: clearly, $L\subset H$.
  \item
    $q_1\in H_1\setminus H_2$, $q_2\in H_2\setminus H_1$, $q_3\notin H_1,H_2$:
    then $q_3\in H$.
  \end{sentences}
  In each case $L$ crosses $H$, and if $L$ has two points in $H$ then
  $L$ is contained in $H$.
\end{proof}
Let $\hipy({\goth M})$ be the set of hyperplanes of $\goth M$.
Note, in addition, that the structure
\begin{equation}\label{def:VS}
  {\VSpace({\goth M})} := \struct{\hipy(\goth M), 
    \big\{ \{ H_1,H_2,H_1\hipcap H_2 \}\colon
    H_1,H_2\in\hipy({\goth M}),\, H_1\neq H_2\big\} }
\end{equation}
is a projective space $\projgeo(n,2)$, possibly degenerated i.e. with $n=-1,0,1$ allowed.
This projective space will be referred to as 
{\em the Veldkamp space of} $\goth M$
(cf. \cite{veldkamp}, \cite{shult}, (or \cite{ronan})).

As a by-product we get that 
{\em for each \PSTS\ $\goth M$, $|\hipy({\goth M})| = 2^{n+1} - 1$ for an integer  $n \geq -1$}.

For an arbitrary set $X$ and $\emptyset\neq A',A''\subset X$ we write 
\begin{equation}\label{def:dziel2hip}
   \hipa(A'|A'') := \sub_2(A')\cup \sub_2(A'').
\end{equation}

The following set-theoretical formula
\begin{multline}\label{wzor:hipcap}
 \hipa(A|X\setminus A)\hipcap \hipa(B|X\setminus B) =
 \\
 \hipa\Big( (A\cap B) \cup \big((X\setminus A)\cap(X\setminus B) \big) \Big{|}
 \big(A\cap (X\setminus B)\big)\cup\big( B\cap (X\setminus A) \big)  \Big)
 \\
 = \hipa(A \syminus B| X\setminus (A\syminus B))
\end{multline}
is valid for any distinct $\emptyset\neq A,B\subsetneq X$.

Let us fix $Z \subsetneq X$, $X$ -- finite. The following is just a simple though important
observation.
\begin{rem}\label{rem:algdziel}
  The set 
  $$
    \big\{\hipa(\{i\}|X\setminus\{i\})\colon i \in Z\big\}
  $$
  generates via $\hipcap$ the subalgebra 
  $$
    D(Z) = \big\{\hipa(A|X\setminus A)\colon \emptyset\neq A \subset Z\big\}
  $$
  of the $\hipcap$-algebra
  $$
    D(X) = \big\{\hipa(A|X\setminus A)\colon \emptyset\neq A \subsetneq X\big\}.
  $$
  Clearly, $D(Z)$ determines as in \eqref{def:VS}
  a Fano projective space $\projgeo(n,2)$, a subspace of the projective space in
  the analogous way associated with $D(X)$.
  Both algebras and both projective spaces up to an isomorphism depend
  entirely on the cardinalities $|Z|$ and $|X|$.
\end{rem}

\subsection{Binomial subconfigurations}

Next, we continue investigations of Subsection \ref{ss:gen},  
but now we concentrate upon
the `complementary configurations' contained in a \BSTS; in view of \ref{pelny2horyzont},
this is an equivalent approach.

Let us begin with a few words on basic properties of such a complementary configuration.
\begin{prop}\label{prop:hipowosc}
  Let $X$ be a $K_n$-graph freely contained in a 
    $\binkonf(n,1)$-configuration ${\goth M} = \struct{S,\lines}$ 
  and let $Y = S \setminus X$ be the corresponding
  $\binkonf(n,0)$-subconfiguration, complementary to $K_X$.
  Then $Y$ is a hyperplane of $\goth M$.
\end{prop}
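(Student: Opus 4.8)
The plan is to unwind the definition of a hyperplane and to check the three ingredients it requires of $Y = S\setminus X$: that it be (a) a subspace of $\goth M$, (b) proper, and (c) a set that crosses every line. Ingredients (a) and (b) are already delivered by Proposition~\ref{pelny2horyzont}, which asserts that $Y$ is a $\binkonf(n,0)$-subconfiguration and, in particular, a subspace of $\goth M$; properness is immediate, since $X\neq\emptyset$ forces $Y\subsetneq S$. Thus the whole substantive content to be established is the crossing condition (c).

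For (c) I would argue that the free-containment hypothesis alone forces the crossing property. By the very definition of $K_X$ being freely contained, no $3$-subset of $X$ lies on a line of $\goth M$; equivalently, every line $L$ satisfies $|L\cap X|\leq 2$. Since each line has exactly three points, this yields $|L\cap(S\setminus X)|\geq 1$, that is $L\cap Y\neq\emptyset$. Hence $Y$ meets every line of $\goth M$, which is precisely what crossing means. Combined with (a) and (b), this shows $Y$ is a proper subspace meeting every line, i.e.\ a hyperplane, completing the proof.

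I expect no serious obstacle once Proposition~\ref{pelny2horyzont} is invoked; the only delicate point is hidden inside that earlier result, namely the verification that $Y$ is genuinely a subspace. Were I to reprove it here, the nontrivial case to exclude would be a line $L$ with $|L\cap X| = 1$, which would place two of its points in $Y$ and the third in $X$, violating closure. This is ruled out by a rank count: in a $\binkonf(n,1)$-configuration every point has rank $n-1$, and a fixed vertex $v\in X$ already lies on the $n-1$ lines $\overline{\{v,w\}}$ with $w\in X\setminus\{v\}$, which are pairwise distinct because three vertices of $X$ are never collinear. These exhaust all lines through $v$, so any line meeting $X$ meets it in exactly two vertices; thus $|L\cap X|\in\{0,2\}$ for every $L$, and $Y$ is indeed closed. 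With the subspace property secured, the short crossing argument of the second paragraph finishes the statement.
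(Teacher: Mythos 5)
Your proof is correct and follows essentially the same route as the paper's: the subspace and properness parts are delegated to Proposition~\ref{pelny2horyzont}, and the crossing property is obtained from the free-containment condition that no three vertices of $X$ are collinear, so every line has at most two points in $X$ and hence at least one in $Y = S\setminus X$. Your extra rank-count paragraph re-justifying the subspace property is sound but not required, since the paper likewise takes that part as given from~\ref{pelny2horyzont}.
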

\begin{proof}
  Let $L\in\lines$, if $x\in L\cap Y$ or $x,y\in L\cap Y$ are given, then $L\cap Y\neq\emptyset$:
  by assumptions. Suppose there are two $x,y\in L$, $x,y\notin Y$.
  So, $x,y\in X$, by assumptions on $X$ we have $\LineOn(x,y)\setminus X \in L\cap Y$.
\end{proof}

Roughly speaking, establishing the structure which maximal complete graphs
yield in a $\binkonf(n,1)$-\BSTS\ consists in establishing the structure 
which maximal binomial subspaces yield in the configuration, 
which can be equivalently reformulated
as establishing the structure of binomial hyperplanes in the binomial configuration.
So, the subject of this paper is the problem known as {\em hyperplanes arrangements}
in binomial partial Steiner triple systems.
  The question if each hyperplane is a complete-graph-complement has, generally a negative solution:
  indeed (cf. \ref{thm:hipingras}), if 
  $|A|,|X\setminus A|> 2$ then 
  $\hipa(A|X\setminus A)$ defined by \eqref{def:dziel2hip}
  is not a binomial configuration, though it happens to be even a hyperplane.
A first counterexample is given in \ref{thm:hipingras}.
A more general argument follows by \ref{prop:veldkam} and the following observation.
\begin{prop}
  Let $Y_1,Y_2$ be complements of distinct maximal complete graphs $X_1,X_2$
  freely contained in a \BSTS\ ${\goth M} = \struct{S,\lines}$, $|S| > 3$.
  Then $Y_1\hipcap Y_2$ is a hyperplane of $\goth M$ which is not  the complement of any 
  complete subgraph of $\goth M$.
\end{prop}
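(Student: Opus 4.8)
The plan is to prove the two assertions separately: first that $H:=Y_1\hipcap Y_2$ is a hyperplane, and then that it is the complement of no complete subgraph. The hyperplane claim is immediate from the algebra already in place: by \ref{prop:hipowosc} each $Y_i=S\setminus X_i$ is a hyperplane of $\goth M$, and $Y_1\neq Y_2$ since $X_1\neq X_2$, so Proposition \ref{prop:veldkam} gives at once that $Y_1\hipcap Y_2$ is a hyperplane. I would dispose of this in one line and concentrate on the second assertion.

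My first step there is to rewrite $H$ as an explicit complement. Using $Y_1\hipcap Y_2=S\setminus(Y_1\syminus Y_2)$ together with the invariance of symmetric difference under complementing both arguments, I obtain $Y_1\syminus Y_2=X_1\syminus X_2$, so that $H=S\setminus W$ with $W:=X_1\syminus X_2$. Hence if $H$ were the complement of a complete subgraph $K_{W'}$, then necessarily $W'=W$, and it is enough to show that $W$ is not the vertex set of a complete graph freely contained in $\goth M$ (in the sense of \ref{pelny2horyzont}).

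For this I would argue by cardinality. Since the complement $Y_1$ of the maximal graph $K_{X_1}$ is a $\binkonf(n,0)$-configuration, $\goth M$ is itself a $\binkonf(n,1)$-configuration with $|S|=\binom{n+1}{2}$, and the hypothesis $|S|>3$ forces $n\geq 3$. By \ref{prop:cross-compl2} the graphs $X_1,X_2$ share exactly one vertex, so $|W|=|X_1|+|X_2|-2\,|X_1\cap X_2|=2n-2\geq n+1$. On the other hand, \ref{minSTS2pelny} shows that a \PSTS\ freely containing $K_{n+1}$ must have at least $\binom{n+2}{2}>\binom{n+1}{2}=|S|$ points, so $\goth M$ contains no freely contained complete graph on more than $n$ vertices. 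As $|W|\geq n+1>n$, the set $W$ cannot carry such a graph, and therefore $H=S\setminus W$ is the complement of no complete subgraph.

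I expect the only genuine obstacle to be this last step: securing both the exact value $|W|=2n-2$, which rests on the ``exactly one common vertex'' property \ref{prop:cross-compl2}, and the ceiling $n$ on the order of any freely contained complete graph, which I would extract from the minimality statement \ref{minSTS2pelny}. As an independent check one can instead produce two disjoint $2$-subsets of $W$ whose lines meet in a common point, directly contradicting free containment; but the counting argument is shorter and cleaner, and I would adopt it.
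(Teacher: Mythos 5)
Your first half matches the paper's (implicit) argument: $Y_1,Y_2$ are distinct hyperplanes by \ref{prop:hipowosc}, so \ref{prop:veldkam} applies, and your identification $S\setminus(Y_1\hipcap Y_2)=Y_1\syminus Y_2=X_1\syminus X_2$ is exactly the paper's formula $X=(X_1\cap Y_2)\cup(X_2\cap Y_1)$. The divergence, and the gap, is in the second half. Your cardinality argument only shows that $W=X_1\syminus X_2$ is not the vertex set of a complete graph \emph{freely contained} in $\goth M$ (too many vertices for $|S|$ points, by \ref{minSTS2pelny}). But the conclusion of the proposition drops the qualifier ``freely contained'': it asserts that $H$ is not the complement of any \emph{complete subgraph}, and the paper's proof accordingly establishes the stronger fact that $W$ is not even a clique, by exhibiting two points of $W$ that are not collinear at all. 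Concretely: writing $X_1\cap X_2=\{c\}$ (by \ref{prop:cross-compl2}), a case analysis shows that the third point of any line joining $x\in X_1\setminus X_2$ to $y\in X_2\setminus X_1$ would have to lie in $X_1\cap X_2$, hence equal $c$; choosing $x$ collinear with $c$ and then $y\in X_2\setminus X_1$ off the line $\LineOn(x,c)$ produces a non-collinear pair inside $W$.

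Counting cannot close this gap. In a $\binkonf(n,+1)$-configuration every point lies on $n-1$ lines and is therefore collinear with at most $2n-2$ other points, so a clique of size $|W|=2n-2$ is not excluded by cardinality; your bound $|W|>n$ contradicts only \emph{free} containment, which additionally forbids collinear triples and concurrent joining lines. The same objection applies to your proposed ``independent check'' (two disjoint $2$-subsets of $W$ with concurrent joining lines): that too refutes only free containment. To prove the statement as written you need the paper's pointwise argument, or some other way of producing a pair of points of $X_1\syminus X_2$ that are not collinear in $\goth M$.
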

\begin{proof}
  Set $Y = Y_1\hipcap Y_2$; it suffices to note that the complement $X = S\setminus Y$ of $Y$
  is not a complete graph. It is seen that 
  \begin{ctext}
    $X = (X_1 \cap Y_2) \cup (X_2\cap Y_1)$.
  \end{ctext}
  If $x,y\in X_i$, $x \neq y$ then, by definition, there is $z\in Y_i$ such that
  $\{ x,y,x \}$ is a line of $\goth M$; we write $z = \{ x,y \}^{\infty_i}$.
  Take any distinct $x,y\in S$.
  \\
  If $x,y\in X_1,Y_2$ then $x,y$ are joinable; let $z = x\oplus y$ be the third element of
  $\LineOn(x,y)$. Then $z\in\LineOn(x,y)\subset Y_2$ and
    $z = \{ x,y \}^{\infty_{1}}\in Y_1$.
  \\
  Analogously, if $x,y\in X_2,Y_1$ then the third point $z = x\oplus y$ of $\LineOn(x,y)$
  lies on $Y_1\cap Y_2$.
  \\
  Take $x\in X_1,Y_2$, $y\in X_2,Y_1$ and suppose there is a line through $x,y$; again we take
  $z = x\oplus y$. By the above, 
    $z\notin X_1\cap Y_2,X_2\cap Y_1,Y_1\cap Y_2$.
  So, only the case $z\in X_1\cap X_2$ remains to be examined.
  \\
  By \ref{prop:cross-compl2},
    $X_1\cap X_2= \{c\}$ for a point $c$. 
  So, finally, we take 
    $x\in X_1 \cap Y_2$ collinear with $c$,
  and 
    $y\in X_2\cap Y_1$, $y\notin \LineOn(x,c)$ 
  and then $x,y \in X$ are not collinear in $\goth M$.
\end{proof}
%

\section{Examples: hyperplanes in $10_3$-configurations}\label{ssec:10-3}  


To give intuitions how the hyperplanes in well known configurations look like we enclose 
this Section.
The following can be proved by a direct inspection of all the $10_3$ configurations.
Some of these facts follow from more general theory developed in next sections,
but we give them right at the beginning to give intuitions
how the theory looks like.
It is known that there are exactly ten $10_3$-configurations
(see e.g.  \cite{obrazki}, \cite{betten}, \cite{klik:VC}, \cite{klik:binom}); 
names of the configurations in question are used mainly after \cite{lanman}).
\begin{prop} (schemes of Veldkamp spaces of the configurations enumerated below
  are presented in figures \ref{fig:kantor}-\ref{fig:unreal})
  \begin{sentences}\itemsep-2pt
  \item Desargues configuration has fifteen hyperplanes (cf. \cite{saniga}).
  \item The Kantor $10_3G$-configuration (Fig. \ref{fig:kantor}) and 
    the nightcap configuration (Fig. \ref{fig:unreal}) have seven 
    hyperplanes each.
  \item The fez configuration (Fig. \ref{fig:fez}) and 
    the headdress configuration (Fig. \ref{fig:head}) contain three 
    hyperplanes each.
  \item The basinet configuration (Fig. \ref{fig:basinet}) and 
    the overseashat  configuration (Fig. \ref{fig:overseas}) contain exactly 
    one hyperplane each.
  \item Every of the remaining three $10_3$ configurations does not contain any 
    hyperplane.
  \end{sentences}
\end{prop}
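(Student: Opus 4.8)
The plan is to reduce the whole statement to a single finite linear-algebra computation per configuration, organised by the projective structure of Proposition~\ref{prop:veldkam}. First I would record the combinatorial meaning of a hyperplane in a $10_3$-configuration ${\goth M}=\struct{S,\lines}$. Since a hyperplane $H$ is a proper subspace crossing every line, and a subspace meeting a $3$-element line in two points must contain the third, each line $L$ satisfies $|H\cap L|\in\{1,3\}$. Passing to the complement $X=S\setminus H$, this says exactly that $X$ is a nonempty set meeting every line in an \emph{even} number of points (necessarily $0$ or $2$); conversely any such nonempty $X$ yields a hyperplane, because a set meeting every line in $0$ or $2$ points never contains a whole $3$-point line, so its complement crosses every line and is automatically a subspace. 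Hence hyperplanes correspond bijectively to the nonzero vectors of the $\mathbb{F}_2$-space
\begin{equation*}
  E({\goth M}) = \big\{ x\in\mathbb{F}_2^{S}\colon Mx = 0 \big\},
\end{equation*}
where $M$ is the $10\times 10$ incidence matrix of $\goth M$ over $\mathbb{F}_2$. This is consistent with Proposition~\ref{prop:veldkam}: the operation $\hipcap$ is the symmetric difference transported to complements, so $E({\goth M})$ is closed under it and its nonzero vectors are precisely the points of $\VSpace({\goth M})$.

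Second, I would observe that this already forces the arithmetic of the answer. Setting $d=\dim_{\mathbb{F}_2}E({\goth M}) = 10 - \operatorname{rank}_{\mathbb{F}_2}(M)$, the number of hyperplanes equals $|E({\goth M})|-1 = 2^{d}-1$, so it is one of $0,1,3,7,15$ according as $d\in\{0,1,2,3,4\}$, i.e.\ according as $\operatorname{rank}_{\mathbb{F}_2}(M)$ is $10,9,8,7,6$. It therefore suffices to compute, for each of the ten configurations, the one integer $\operatorname{rank}_{\mathbb{F}_2}(M)$ and to match it against the list: rank $6$ gives $15$ (Desargues), rank $7$ gives $7$ (Kantor $10_3G$ and nightcap), rank $8$ gives $3$ (fez and headdress), rank $9$ gives $1$ (basinet and overseashat), and full rank $10$ gives $0$ (the remaining three).

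Finally, I would carry out the ten rank computations from explicit triple-lists of the configurations, taken in the coordinatisations of the cited references; once the incidence matrices are written down this is a purely mechanical task. The only genuine obstacle is bookkeeping: one must fix a correct and unambiguous list of the ten lines for each named configuration and keep the point-labellings consistent, since a single mistaken triple can alter the rank and hence the predicted count. For the configurations with few or no hyperplanes it is cleanest to confirm the asserted rank by solving $Mx=0$ directly and exhibiting a basis of $E({\goth M})$; such a basis simultaneously produces the concrete hyperplanes depicted in Figures~\ref{fig:kantor}--\ref{fig:unreal}, so the same computation settles both the counts and the pictures.
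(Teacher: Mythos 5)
Your reduction is correct, and it is in fact more explicit than what the paper offers: the paper proves this proposition only by appeal to ``a direct inspection of all the $10_3$ configurations'', giving no method. Your key observation --- that $H$ is a hyperplane iff every line meets $H$ in $1$ or $3$ points, iff the complement $X=S\setminus H$ is a nonempty set meeting every line evenly, i.e.\ a nonzero vector of the $\mathbb{F}_2$-kernel of the incidence matrix --- is valid in both directions, and it correctly transports the operation $\hipcap$ of \ref{prop:veldkam} to addition of complements, so the count is forced to be $2^{d}-1$ with $d=10-\operatorname{rank}_{\mathbb{F}_2}(M)$; this is also consistent with the paper's remark that $|\hipy({\goth M})|=2^{n+1}-1$. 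Turning the ``inspection'' into ten rank computations is a genuine gain in checkability over examining all $2^{10}$ subsets. The only thing you have not done is actually carry out those computations (or exhibit bases of the kernels), so strictly speaking the specific values $15,7,3,1,0$ remain unverified in your write-up; but the paper defers exactly the same finite verification to the reader, so your proposal is at least as complete as the source and considerably better organized.
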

\begin{figure}[H]
\begin{center}
  \includegraphics[scale=0.3]{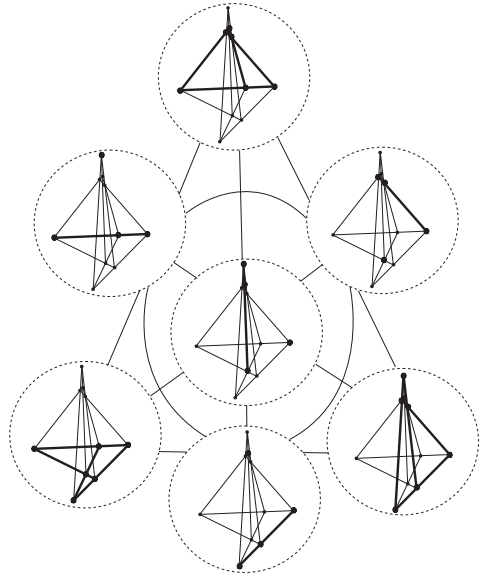}
\end{center}
\caption{The Veldkamp Space of the Kantor $10_3G$-Configuration $\VerSpace(3,3)$}
\label{fig:kantor}
\end{figure}

\begin{figure}[H]
\centering
\begin{minipage}{.45\textwidth}
  \centering
  \includegraphics[scale=0.3]{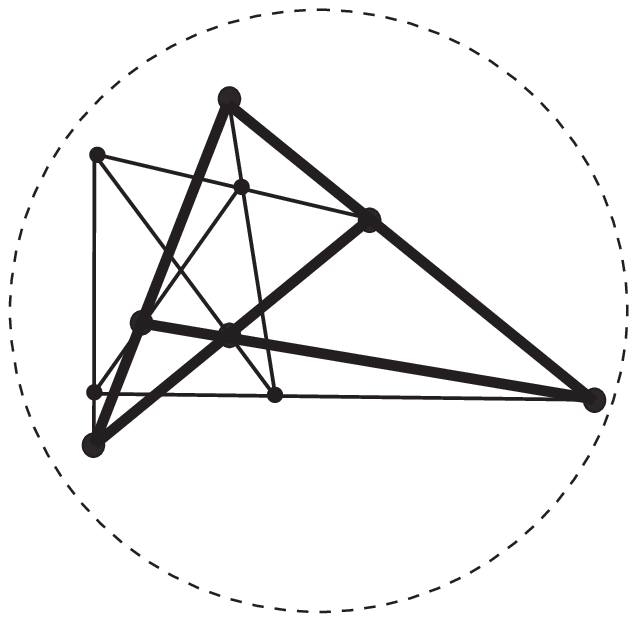}
  \captionof{figure}{The Veldkamp space  of the basinet configuration}
  \label{fig:basinet}
\end{minipage}%
\hfill
\begin{minipage}{.45\textwidth}
  \centering
  \includegraphics[scale=0.3]{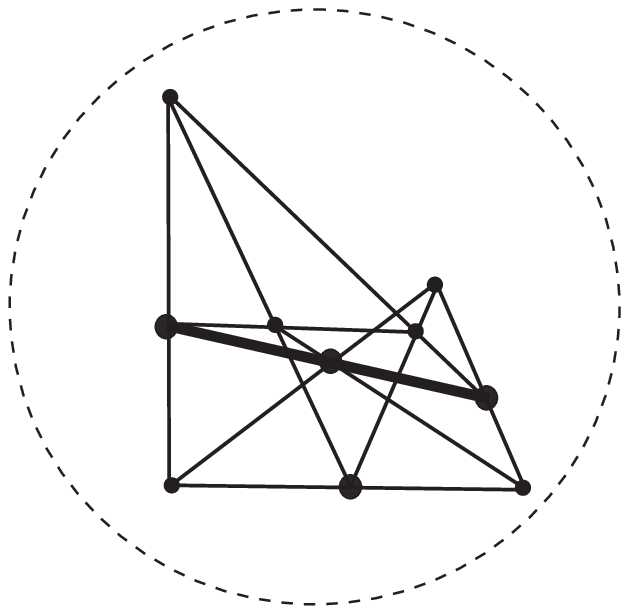}
  \captionof{figure}{The Veldkamp space of the overseas-cap configuration}
  \label{fig:overseas}
\end{minipage}
\end{figure}

\begin{figure}[H]
\begin{center}
  \includegraphics[scale=0.3]{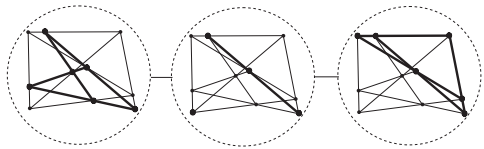}
\end{center}
\caption{The Veldkamp space of the fez configuration}
\label{fig:fez}
\end{figure}

\begin{figure}[H]
\begin{center}
  \includegraphics[scale=0.3]{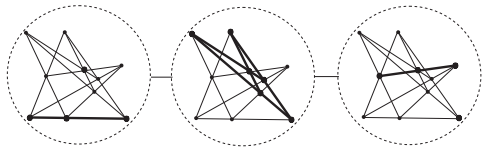}
\end{center}
\caption{The Veldkamp space of the headdress configuration}
\label{fig:head}
\end{figure}

As a consequence we can formulate
\begin{rem}
  There are binomial configurations (even quite small: $10_3$-confi\-gu\-ra\-tions)
  with exactly one hyperplane. This one may be the complement of
  a complete graph 
  (a $10_3$-configuration with exactly one Veblen subconfiguration)
  or not
  (a $10_3$-configuration whose unique hyperplane consists of a point and a line).
\end{rem}

\begin{figure}[H]
\begin{center}
  \includegraphics[scale=0.3]{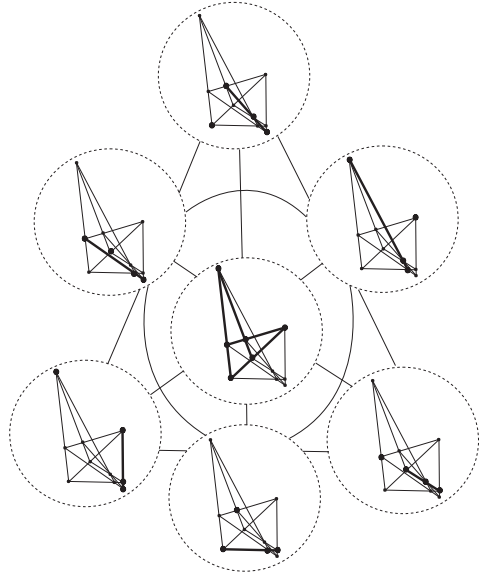}
\end{center}
\caption{The Veldkamp space of the nightcap configuration}
\label{fig:unreal}
\end{figure}


\section{A general approach: 
Hyperplanes of binomial \PSTS's with a maximal  complete graph inside}
\label{sec:withgraf}

Let us begin with the formal construction which makes more precise the statements
of \ref{pelny2horyzont}. 
Let $|X| = n$ and $0\notin X$, $W = X\cup\{  0 \}$.
Every \BSTS\ $\goth M$ with $\binom{n+1}{2}$ vertices and $K_X$ freely contained in it can be presented in the form
$K_X +^\mu {\goth V}$, defined below:

let 
  $\goth V$ be a $\binkonf(n,0)$-configuration 
and 
  $\mu$ be a bijection of $\sub_2(X)$ onto the  point set of $\goth V$. 
The point set of $K_X +^\mu {\goth V}$ is the union of the
set of vertices of $K_X$ and the point set of $\goth V$. 
The set of lines of $K_X +^\mu {\goth V}$
is the union of the set of lines of $\goth V$ and the family
  $\big\{ \{x,y,\mu(\{ x,y \})\}\colon x,y\in X, x\neq y \big\}$.
Up to an isomorphism, $K_X +^\mu {\goth V}$ can be in a natural way defined on the set $\sub_2(W)$
as its point set: 
we identify each $x\in X$ with the set $\{ 0,x \}$, 
and identify  each point $\mu(\{x,y\})$ of $\goth V$ with  the set $\{ x,y \}$, 
suitably transforming the line set of $\goth V$ and putting, formally, 
$\mu(\{ x,y \}) = \{ x,y \}$. 
Frequently, we write 
  ${(x,y)}^\infty = {\LineOn(x,y)}^\infty = \mu(\{x,y\})$ for distinct $x,y \in X$.

In the first step we shall characterize hyperplanes in a configuration ${\goth M} = K_X +^\mu {\goth V}$.
So, let $H$ be a hyperplane of $\goth M$. 
Let $V$ be the point set of $\goth V$; then $H_0 = H\cap V$ is a hyperplane of $\goth V$ or $H_0 = V$.
\def\whip{{\sim}}
\def\nowhip{\not\mathrel{\whip}}
We begin with several technical lemmas. For $x,y\in X$ we write
  $x \whip y$ when $x\neq y$ and ${(x,y)}^\infty\in H_0$ or $x = y \in X$. 
\begin{lem}\label{whip1}
  The relation $\whip$ is an equivalence relation.
\end{lem}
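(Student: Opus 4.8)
The plan is to recognise $\whip$ as the relation ``$x$ and $y$ lie on the same side of $H$'' on the vertex set $X$, which is patently an equivalence. \emph{Reflexivity} is built into the definition by the clause $x=y\in X$, and \emph{symmetry} is immediate since ${(x,y)}^\infty={(y,x)}^\infty$ is the single point $\mu(\{x,y\})$, so that $x\whip y$ and $y\whip x$ assert literally the same membership. Hence the entire content of the lemma is \emph{transitivity}.

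To get transitivity I would first establish, for distinct $x,y\in X$, the characterization that ${(x,y)}^\infty\in H$ holds exactly when $x$ and $y$ are both in $H$ or both outside $H$. Observe first that ${(x,y)}^\infty\in V$, so ${(x,y)}^\infty\in H_0$ is the same as ${(x,y)}^\infty\in H$; thus $x\whip y$ (for $x\neq y$) is precisely this membership. The proof of the characterization uses the line $L=\{x,y,{(x,y)}^\infty\}$ of $\goth M$ together with both defining features of a hyperplane: since $H$ crosses every line we have $L\cap H\neq\emptyset$, and since $H$ is a subspace a line meeting $H$ in two points is contained in $H$, so $L$ cannot meet $H$ in exactly two points. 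Consequently $|L\cap H|\in\{1,3\}$ is odd, and therefore ${(x,y)}^\infty\in H$ iff an even number of the two vertices $x,y$ lie in $H$, i.e. iff $x$ and $y$ agree on membership in $H$.

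With this in hand, transitivity is automatic: recording for each $x\in X$ the bit telling whether $x\in H$, the relation $x\whip y$ (for $x\neq y$) holds iff the two bits agree, while $x\whip x$ always holds; hence $\whip$ is exactly the kernel of this two-valued map, and so is transitive --- indeed an equivalence with at most two classes. I do not anticipate a real obstacle. The only step demanding care is the parity count, which must invoke \emph{both} the crossing property and the subspace (closure) property of $H$: dropping either would permit a line to meet $H$ in $0$ or in exactly $2$ points and would destroy the oddness of $|L\cap H|$ on which the whole argument rests.
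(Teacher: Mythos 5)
Your proof is correct, but it is organized differently from the paper's. The paper disposes of transitivity directly: assuming ${(x,y)}^\infty,{(x,z)}^\infty\in H_0$ and ${(y,z)}^\infty\notin H_0$, it deduces $H\cap\LineOn(y,z)\subset\{y,z\}$, picks $y\in H$ (some vertex of $\LineOn(y,z)$ must lie in $H$ since $H$ crosses that line), and then walks along the lines $\LineOn(y,{(x,y)}^\infty)$ and $\LineOn(x,{(x,z)}^\infty)$ to force $x,z\in H$ and finally $\LineOn(y,z)\subset H$, a contradiction. You instead prove the stronger structural fact that for distinct $x,y\in X$ one has ${(x,y)}^\infty\in H$ iff $x$ and $y$ agree on membership in $H$ (your parity count $|L\cap H|\in\{1,3\}$ is a clean way to package the two hyperplane properties), so that $\whip$ is the kernel of $\chi_H\restriction X$ and transitivity is automatic. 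Both arguments use exactly the same two ingredients --- $H$ crosses every line, and a line with two points in the subspace $H$ lies in $H$ --- applied to the lines $\{x,y,{(x,y)}^\infty\}$; what your route buys is that your characterization immediately yields the subsequent Lemmas \ref{whip2}--\ref{whip4} as well (the classes are exactly the nonempty ones among $X\cap H$ and $X\setminus H$), whereas the paper proves those separately. One small point of care, which you did address: the definition of $\whip$ refers to $H_0=H\cap V$, and since ${(x,y)}^\infty\in V$ this is the same as membership in $H$, so the identification is legitimate.
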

\begin{proof}
  It is evident that $\whip$ is symmetric and reflexive. So it remains to prove the transitivity of $\whip$.
  Let $x,y,z\in X$ be pairwise distinct. Assume that ${(x,y)}^\infty, {(x,z)}^\infty \in H_0$ and suppose that
  ${(y,z)}^\infty\notin H_0$. Then $H \cap \LineOn(y,z) \subset \{y,z\}$, as $H$ crosses {\em every} line of $\goth M$.
  Assume $y\in H$; from $\LineOn(y,{(x,y)}^\infty) \subset H$ we infer $x \in H$ and then $z\in H$ follows.
  Finally, $\LineOn(y,z)\subset H$, so ${(y,z)}^\infty\in H_0$.
\end{proof}
\begin{lem}\label{whip2}
  Let $x \in X$. If there is $z\in [x]_\whip \cap H_0$ then $[x]_\whip \subset H$.
\end{lem}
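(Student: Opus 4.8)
The plan is to exploit the defining property of a subspace: a subspace that meets a line in two points must contain the whole line. Before anything else, I would pin down the hypothesis. Since $[x]_\whip$ is by construction a set of \emph{vertices} (a subset of $X$), whereas $H_0 = H\cap V$ consists only of points of $\goth V$, the two sets are disjoint; so the effective content of the hypothesis can only be that \emph{some vertex} $z$ of the class $[x]_\whip$ lies in $H$. This is the reading I shall adopt, and the conclusion $[x]_\whip\subseteq H$ then asserts that an $\whip$-class is either wholly inside or wholly outside $H$.

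First I would fix such a $z\in[x]_\whip$ with $z\in H$ and take an arbitrary $w\in[x]_\whip$, aiming to show $w\in H$. The case $w=z$ is immediate, so I may assume $w\neq z$. Because $z$ and $w$ belong to one and the same $\whip$-class (which is legitimate by Lemma~\ref{whip1}, $\whip$ being an equivalence relation), we have $z\whip w$, and since $z\neq w$ this means precisely that ${(z,w)}^\infty\in H_0\subseteq H$ by the definition of $\whip$.

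Next I would consider the line $L=\{z,w,{(z,w)}^\infty\}=\LineOn(z,w)$, which is one of the lines belonging to the $K_X$-part of $\goth M$ (exactly because $z,w$ are distinct vertices of the freely contained complete graph). Two of its points, namely $z$ and ${(z,w)}^\infty$, already lie in $H$; as $H$ is a subspace of $\goth M$, it follows that $L\subseteq H$, and in particular $w\in H$. Since $w\in[x]_\whip$ was arbitrary, this gives $[x]_\whip\subseteq H$, as required.

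I do not expect a genuine obstacle here: once the hypothesis is read correctly, the argument is a single application of the subspace property. The only points requiring care are (i) recognising that the hypothesis must place a \emph{vertex} of the class inside $H$, not inside $H_0$ (which $[x]_\whip$ cannot meet), and (ii) verifying that $\{z,w,{(z,w)}^\infty\}$ is indeed a line of $\goth M$, so that the ``two points force the third'' property legitimately applies.
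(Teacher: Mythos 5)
Your proof is correct and is essentially the paper's own argument: for an arbitrary $w$ with $w\whip z$, the line $\LineOn(z,w)$ already has the two points $z$ and ${(z,w)}^\infty$ in $H$, so the subspace property forces $w\in H$. Your preliminary reading of the hypothesis --- that it can only mean a \emph{vertex} of the class lies in $H$, since $[x]_\whip\subseteq X$ cannot meet $H_0\subseteq V$ --- is exactly the reading the paper's own proof tacitly uses (it invokes ``$z\in H$''), so that clarification is well taken rather than a deviation.
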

\begin{proof}
  From assumptions, $[x]_\whip = [z]_\whip$. Let $y \whip z$ be arbitrary. 
  Then ${(y,z)}^\infty \in H$ and $z \in H$ yield   $y \in H$.
\end{proof}
Write ${\cal X} = X\diagup\whip$. From \ref{whip2} we know that 
\begin{ctext}
  for every ${\goth a} \in {\cal X}$, either ${\goth a} \subset H$ or ${\goth a}\cap H = \emptyset$.
\end{ctext}
\begin{lem}\label{whip3}
  For every ${\goth a},{\goth b}\in{\cal X}$ if ${\goth a},{\goth b} \subset H$ then 
  ${\goth a} = {\goth b}$.
\end{lem}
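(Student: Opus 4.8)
The plan is to argue by contradiction, exploiting the single ingredient that the previous two lemmas have not yet used: the fact that $H$, being a subspace, is closed under completing a line as soon as two of its points lie in $H$. Lemmas \ref{whip1} and \ref{whip2} already guarantee that ${\cal X} = X\diagup\whip$ partitions $X$ into classes each of which lies wholly inside or wholly outside $H$; what remains is precisely to rule out the coexistence of two \emph{distinct} classes inside $H$.

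So I would suppose, to the contrary, that ${\goth a},{\goth b}\in{\cal X}$ are distinct and both satisfy ${\goth a},{\goth b}\subset H$. First pick representatives $x\in{\goth a}$ and $y\in{\goth b}$. Since distinct $\whip$-classes are disjoint, $x\neq y$, and since ${\goth a}\neq{\goth b}$ we have $x\nowhip y$; by the very definition of $\whip$ (with $x\neq y$) this forces ${(x,y)}^\infty\notin H_0$. Because ${(x,y)}^\infty$ is a point of $\goth V$, it lies in $V$, so ${(x,y)}^\infty\in H$ would be the same as ${(x,y)}^\infty\in H\cap V = H_0$; hence ${(x,y)}^\infty\notin H$.

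Now I would invoke the line $\LineOn(x,y) = \{x,y,{(x,y)}^\infty\}$ of $\goth M$. By hypothesis $x\in{\goth a}\subset H$ and $y\in{\goth b}\subset H$, so this line carries two points of $H$. Since $H$ is a subspace, a line meeting it in at least two points is entirely contained in it, whence ${(x,y)}^\infty\in H$ --- contradicting the previous paragraph. This contradiction yields ${\goth a}={\goth b}$, as required.

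As for the main obstacle: there is in truth very little difficulty here. The only point demanding care is the bookkeeping that identifies membership of ${(x,y)}^\infty$ in $H$ with membership in $H_0$, which rests on ${(x,y)}^\infty$ being a $\goth V$-point; everything else is an immediate application of the subspace closure property to the single line $\LineOn(x,y)$. Thus the statement is essentially a one-line consequence of the definitions once the equivalence classes have been set up, and it complements Lemma \ref{whip2} by showing that the vertices of $X$ lying in $H$ form at most one $\whip$-class.
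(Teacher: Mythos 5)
Your proof is correct and is essentially the paper's own argument, merely recast as a proof by contradiction: both hinge on the single observation that $x,y\in H$ forces $\LineOn(x,y)\subset H$ and hence ${(x,y)}^\infty\in H_0$, i.e.\ $x\whip y$. The extra care you take in identifying membership of ${(x,y)}^\infty$ in $H$ with membership in $H_0$ (because it is a point of $\goth V$) is a detail the paper leaves implicit.
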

\begin{proof}
  Let $x,y\in X$ such that ${\goth a} = [x]_\whip$ and ${\goth b} = [y]_\whip$. Let $x\neq y$. From assumptions,
  $x,y \in H$ and then $\LineOn(x,y)\subset H$ gives ${(x,y)}^\infty\in H$ i.e. $x\whip y$, as required.
\end{proof}
\begin{lem}\label{whip4}
  For every distinct ${\goth a},{\goth b}\in{\cal X}$, ${\goth a} \subset H$ or ${\goth b} \subset H$.
\end{lem}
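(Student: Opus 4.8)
The plan is to argue by contradiction, playing the hyperplane crossing property against the very definition of $\whip$. Suppose that neither ${\goth a}$ nor ${\goth b}$ is contained in $H$. By the dichotomy recorded as a consequence of Lemma \ref{whip2} --- that each $\whip$-class is either contained in $H$ or disjoint from $H$ --- this forces ${\goth a}\cap H = \emptyset$ and ${\goth b}\cap H = \emptyset$.

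First I would choose representatives $x\in{\goth a}$ and $y\in{\goth b}$. Since ${\goth a}$ and ${\goth b}$ are distinct classes we have $x\neq y$ and $x\nowhip y$, which by the definition of $\whip$ means exactly ${(x,y)}^\infty\notin H_0$. The key observation is that ${(x,y)}^\infty=\mu(\{x,y\})$ is a point of $\goth V$, hence ${(x,y)}^\infty\in V$; therefore membership of ${(x,y)}^\infty$ in $H$ is governed entirely by $H_0=H\cap V$, and from ${(x,y)}^\infty\notin H_0$ we conclude ${(x,y)}^\infty\notin H$.

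Finally I would turn to the line $\LineOn(x,y)=\{x,y,{(x,y)}^\infty\}$ of $\goth M$. From ${\goth a}\cap H=\emptyset$ and ${\goth b}\cap H=\emptyset$ we get $x\notin H$ and $y\notin H$, and combined with ${(x,y)}^\infty\notin H$ this yields $\LineOn(x,y)\cap H=\emptyset$. This contradicts the fact that $H$, being a hyperplane, must cross every line of $\goth M$; hence ${\goth a}\subset H$ or ${\goth b}\subset H$. I do not expect a genuine obstacle here: the argument is a one-line application of the crossing property once the right line has been selected. The only point that demands care is the remark that ${(x,y)}^\infty$ lies in $V$, so that its absence from $H_0$ already secures its absence from $H$ --- without this the chosen line might still meet $H$ at its infinite point, and the contradiction would collapse.
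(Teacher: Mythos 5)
Your proposal is correct and is essentially the paper's own argument, merely run as a proof by contradiction instead of directly: both hinge on the fact that distinctness of the classes gives ${(x,y)}^\infty\notin H$, that the line $\LineOn(x,y)$ must cross $H$, and then the dichotomy from Lemma \ref{whip2} to pass from a representative to the whole class. Your explicit remark that ${(x,y)}^\infty\in V$ so that $H_0$ already decides its membership in $H$ is a correct (and slightly more careful) spelling-out of a step the paper leaves implicit.
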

\begin{proof}
  Let $x,y\in X$ such that ${\goth a} = [x]_\whip$ and ${\goth b} = [y]_\whip$. From assumptions, $x\nowhip y$
  i.e. ${(x,y)}^\infty\notin H$.
  But $\LineOn(x,y)$ crosses $H$, so $x\in H$ or $y \in H$. From \ref{whip2} we get the claim.
\end{proof}
Now, we are in a position to prove the first (main) characterization.
\begin{thm}\label{prop:hipinSSP:0}
  Let $H$ be a hyperplane of ${\goth M} = K_X +^\mu {\goth V}$ defined on the set $\sub_2(W)$, as
  introduced at the beginning of the section.
  Then there is a subset $A$ of $W$ such that $H = \hipa(A|W\setminus A)$.
\end{thm}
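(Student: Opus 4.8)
The plan is to extract the set $A$ from the partition ${\cal X}=X\diagup\whip$ delivered by Lemmas \ref{whip1}--\ref{whip4}, and then to check the set--equality $H=\hipa(A|W\setminus A)$ separately on the two kinds of points of ${\goth M}$: the vertices $\{0,x\}$ with $x\in X$, and the points of ${\goth V}$, which are precisely the $2$-subsets $\{x,y\}\subset X$. The element $0$ is not a point of ${\goth M}$; it serves only to place all the vertices on one side of the bipartition of $W$ that defines $A$.

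First I would use \ref{whip3} and \ref{whip4} to pin down the coarse shape of ${\cal X}$: by \ref{whip3} at most one class lies inside $H$, and by \ref{whip4} at most one class is disjoint from $H$, so $|{\cal X}|\le 2$. Put $P:=\{x\in X\colon \{0,x\}\in H\}$. By the displayed observation following \ref{whip2} together with \ref{whip3}, $P$ is either the unique class contained in $H$ or is empty, and in either case $X\setminus P$ is a single $\whip$-class, namely the unique class disjoint from $H$. This class is nonempty: were $P=X$, then $X$ would be one class lying inside $H$, forcing every pair $\{x,y\}$ into $H_0$ and every vertex into $H$, i.e.\ $H=\sub_2(W)$, contradicting that $H$ is a proper subspace. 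The degenerate possibility $H_0=V$ is the case $P=\emptyset$, where $X$ itself is the unique class, and it is handled uniformly below.

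I would then set $A:=\{0\}\cup P$, so that $W\setminus A=X\setminus P$, with both parts nonempty as required in \eqref{def:dziel2hip}. The verification splits by point type. For a vertex, $\{0,x\}\in\sub_2(A)$ holds iff $x\in P$ (since $0\in A$ always and $0\notin W\setminus A$), and this is exactly the defining condition $\{0,x\}\in H$. For a point of ${\goth V}$, we have $\{x,y\}\in H$ iff $x\whip y$, i.e.\ iff $x,y$ lie in a common $\whip$-class; since ${\cal X}$ consists of at most the two classes $P$ and $X\setminus P$, this is equivalent to $x,y$ both lying in $P$ or both in $X\setminus P$, hence to $\{x,y\}$ lying in $\sub_2(P)\cup\sub_2(X\setminus P)$, which is exactly the set of $2$-subsets of $X$ contained in $\hipa(A|W\setminus A)=\sub_2(A)\cup\sub_2(W\setminus A)$. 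Collecting the two cases yields $H=\hipa(A|W\setminus A)$.

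The step carrying the real weight is the appeal to \ref{whip4}. It is precisely the bound $|{\cal X}|\le 2$ that forbids two distinct classes from both sitting inside $X\setminus P$: such a configuration would give a pair $\{x,y\}\subset X\setminus P$ with $x\nowhip y$, hence $\{x,y\}\notin H$, while $\sub_2(W\setminus A)$ would still contain it, breaking the equality. Thus the single-class structure of $X\setminus P$ is the crux, and once it is in hand the remaining work is only the bookkeeping around the auxiliary element $0$ and the uniform treatment of $P=\emptyset$.
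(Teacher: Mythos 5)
Your proof is correct and follows essentially the same route as the paper's: both rest on Lemmas \ref{whip1}--\ref{whip4} to show that $\whip$ has at most two classes, one inside $H$ and one disjoint from it, and then read off $A$ by attaching $0$ to the class contained in $H$. The only difference is presentational — you treat the degenerate case $H_0=V$ uniformly via $A=\{0\}\cup P$ with $P=\emptyset$, where the paper handles it as a separate case.
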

\begin{proof}
  From \ref{whip3} and \ref{whip4} we get that either $\whip$ has exactly two equivalence classes 
    ${\goth a}\subset H, {\goth b} \subset X\setminus H$ 
  or 
    ${\cal X} = \{ X  \}$.
  In the second case, $V \subset H$, and if there were $x\in H \cap X$ then $H$ is the point set of $\goth M$.
  So, 
     $H = V = \sub_2(X) = \hipa(\{ 0 \} | W \setminus \{ 0 \} )$.
  Let us pass to the first case. Note that $H$ is the union of three sets:
    $\mu(\sub_2({\goth a})) = \sub_2({\goth a})$,
    $\mu(\sub_2({\goth b})) = \sub_2({\goth b})$,
  and $\goth a$ which, under identification introduced before, corresponds to 
    $\big\{ \{ 0,x \}\colon x \in {\goth a} \big\}$.
  So, finally, $H$ can be written in the form 
    $\sub_2({\goth b}) \cup \sub_2({\goth a}\cup\{ 0 \})
    = \hipa({\goth b}|W\setminus{\goth b})$.
\end{proof}

Next, we are going to determine which ``bipartite'' sets $\hipa(A|X\setminus A)$ are hyperplanes of suitable
\BSTS's. To this aim one should know more precisely what is the number of complete graphs inside a given
configuration.

Let us recall the following construction
\par
Let $I = \{ 1,\ldots,m \}$ be arbitrary, let $n > m$ be an integer, and let
$X$ be a set with $n-m+1$ elements.
Let us fix an arbitrary $\binkonf(n-m,+1)$-configuration 
${\goth B} = \struct{Z,{\cal G}}$.
Assume that we have two maps $\mu,\xi$ defined:
  $\mu\colon I\longrightarrow Z^{\sub_2(X)}$ 
and
  $\xi\colon I\times I\longrightarrow S_X$, such that
$\xi_{i,i} = \id$, $\xi_{i,j} = \xi_{j,i}^{-1}$, and
$\mu_i$ is a bijection for all $i,j\in I$.
Set 
  $S = Z \cup (X\times I) \cup \sub_2(I)$ 
(to avoid silly errors we assume that the given three sets are pairwise disjoint).
On $S$ we define the following family $\lines$ of blocks
\begin{eqnarray}
  \label{SSP:lines1}
  \lines & = & {\cal G} 
  \\ \label{SSP:lines2}
  \strut & \cup & \text{ the lines of } \GrasSpace(I,2) 
  \\ \label{SSP:lines3}
  \strut & \cup & \left\{ \{ \{i,j\}, (x,i), (\xi_{i,j}(x),j)  \}
                  \colon \{i,j\}\in\sub_2(I), x \in X  \right\} 
  \\ \label{SSP:lines4}
  \strut & \cup & \left\{ \{ (a,i), (b,i), \mu_i(\{ a,b \}) \} 
                  \colon \{ a,b \}\in \sub_2(X), i\in I  \right\}.
\end{eqnarray}
Write 
  $m \bowtie^\mu_\xi {\goth B} = \struct{S,\lines}$. 
It needs only a straightforward (though quite tedious) verification to prove that 
{\em ${\goth M}:= m \bowtie^\mu_\xi {\goth B}$ is a $\binkonf(n,+1)$-configuration}. 

For each $i\in I$ we set 
  $Z_i = X\times\{ i \}$, $S_i = \{ e\in\sub_2(I)\colon i\in e \}$,
and 
  $X_i = Z_i \cup S_i$. 
Then 
{\em $\goth M$ freely contains $m$ $K_n$-graphs; these are $X_1,\ldots,X_m$}.
It is seen that the point $\{i,j\}$ is the ``perspective center" of two subgraphs $Z_i,Z_j$
of $\goth M$. So, we call the arising configuration 
{\em a system of perspectives of $(n-m+1)$-simplices}.
Define 
  $\mu_i\colon\sub_2(Z_i)\longrightarrow Z$ 
by the formula
  $\mu_i(\{ (x,i),(y,i) \}) = \mu(i)(\{ x,y \})$; 
the configuration $\goth B$ is the common `axis' of  the configurations 
  $\struct{Z_i,\sub_2(Z_i)} +_{\mu_i} {\goth B}$
contained in $\goth M$.
Let us denote $W = X\cup I$.
Without loss of generality we can assume that $Z = \sub_2(X)$
and each $(x,i)\in X\times I$ can be identified with the set $\{ x,i \}$.
After this identification $\sub_2(W)$ becomes the point set of $\goth M$.

The following is crucial:
\begin{thm}[{\cite{klik:binom}}]
  Let $\goth M$ be a $\binkonf(n,+1)$-configuration.
  $\goth M$ freely contains (at least) $m$ $K_{n}$-graphs
  iff 
    ${\goth M}\cong {m \bowtie^\mu_\xi {\goth B}}$
  for a $\binkonf(n-m,+1)$-configuration $\goth B$
  and a pair $(\mu,\xi)$ of suitable maps.
\end{thm}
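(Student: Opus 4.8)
The plan is to prove the two implications separately. The backward implication is the assertion, already announced in the text preceding the statement, that $m\bowtie^\mu_\xi{\goth B}$ is a $\binkonf(n,+1)$-configuration freely containing the $m$ graphs $X_1,\dots,X_m$; there I would only organise the ``straightforward though tedious'' verification by checking that the four line families \eqref{SSP:lines1}--\eqref{SSP:lines4} are pairwise disjoint, that every point lies on exactly $n-1$ of them, and that free containment of each $X_i$ reduces precisely to the hypotheses that $\mu_i$ and each $\xi_{i,j}$ are bijections. The forward implication carries the real content, and for it I would build the isomorphism explicitly rather than by induction on $m$.

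So let ${\goth M}=\struct{S,\lines}$ be a $\binkonf(n,+1)$-configuration freely containing $m$ copies $X_1,\dots,X_m$ of $K_n$, and fix $I=\{1,\dots,m\}$. By \ref{prop:cross-compl2} any two of them meet in a single vertex $c_{i,j}$, and I would first record that the centres are distinct and that no three graphs share a vertex: if $c_{i,j}=c_{i,k}$ then that common point lies in $X_i\cap X_j\cap X_k$, forcing $c_{i,j}=c_{i,k}=c_{j,k}$, so the ``line'' of \ref{prop:cross-compl-line} degenerates to a single point, which is impossible. Thus each $X_i$ contains exactly $m-1$ centres $c_{i,j}$ $(j\neq i)$ and $n-m+1$ remaining \emph{private} vertices, distinct graphs having disjoint private-vertex sets. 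Setting $c_{i,j}\mapsto\{i,j\}$ identifies the $\binom{m}{2}$ centres with $\sub_2(I)$, and \ref{prop:cross-compl-line} shows that the lines among them are exactly the triples $\{c_{i,j},c_{i,k},c_{j,k}\}$, i.e. the lines \eqref{SSP:lines2} of $\GrasSpace(I,2)$.

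The key structural observation I would isolate next is a rank count. Since $\goth M$ has point rank $n-1$ and a vertex of the complete graph $X_i$ is already joined inside $X_i$ to its other $n-1$ vertices by $n-1$ distinct lines (distinct by free containment, as no triangle of $X_i$ is a line), \emph{every} line through a vertex of $X_i$ is a graph-line of $X_i$. Applied at a centre $c_{i,j}$, a vertex of both $X_i$ and $X_j$, this shows that each of the $n-1$ lines through $c_{i,j}$ meets $X_i$ in one further vertex and $X_j$ in one further vertex, and these two are distinct; this forces a bijection (the perspectivity $\xi_{i,j}$) between the non-centre vertices of $X_i$ and those of $X_j$. By \ref{prop:cross-compl-line} the bijection carries centres to centres, $c_{i,k}\mapsto c_{j,k}$, hence restricts to a bijection $X\to X$ of private vertices producing the family \eqref{SSP:lines3}; symmetry of the configuration gives $\xi_{i,i}=\id$ and $\xi_{i,j}=\xi_{j,i}^{-1}$. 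Applying the same count at a private vertex shows that a line joining two private vertices of $X_i$ has its third point outside every $X_j$ (were it in some $X_j$ the line would be a graph-line of $X_j$, yet its other two points are not vertices of $X_j$); these third points therefore lie in $Z:=S\setminus\bigcup_i X_i$, defining the maps $\mu_i$ and the family \eqref{SSP:lines4}.

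It remains to verify that $Z$ with its induced lines is a $\binkonf(n-m,+1)$-configuration $\goth B$ and that \eqref{SSP:lines1}--\eqref{SSP:lines4} exhaust $\lines$; this bookkeeping is where I expect the effort to concentrate. Inclusion--exclusion gives $|Z|=\binom{n+1}{2}-mn+\binom{m}{2}=\binom{n-m+1}{2}$, the point number of $\goth B$. That each $\mu_i$ is injective, hence bijective onto $Z$, follows from free containment: two distinct private pairs of $X_i$ are either disjoint, so their lines are disjoint, or share a vertex, so their lines meet only there, and in neither case can they share a point of $Z$. Counting the $n-1$ lines through a point $p\in Z$ as exactly $m$ lines of type \eqref{SSP:lines4} (one per $i$, by bijectivity of $\mu_i$) together with the lines inside $Z$ then shows that $\goth B$ has constant rank $n-m-1$, so $\goth B$ is a $\binkonf(n-m,+1)$-configuration. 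Finally the rank count classifies every line of $\goth M$: a line through a centre is of type \eqref{SSP:lines2} or \eqref{SSP:lines3}, a line avoiding the centres but meeting some $X_i$ is of type \eqref{SSP:lines4}, and a line disjoint from $\bigcup_i X_i$ lies in $\goth B$; the four types are pairwise disjoint, and as a check their cardinalities sum by the Vandermonde identity to $\binom{n+1}{3}=|\lines|$. Assembling the identifications $c_{i,j}\leftrightarrow\{i,j\}$, private vertex $\leftrightarrow(x,i)$, and $Z\leftrightarrow$ points of $\goth B$ then yields the required isomorphism ${\goth M}\cong m\bowtie^\mu_\xi{\goth B}$.
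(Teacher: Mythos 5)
The paper does not prove this theorem at all: it is imported verbatim from \cite{klik:binom} (the statement carries that citation, and the surrounding text only remarks that the backward direction ``needs only a straightforward though quite tedious verification''). So there is no in-paper proof to compare against, and your proposal has to be judged as a standalone reconstruction. As such it is sound. The engine of your argument is the rank count -- since $\goth M$ has point rank $n-1$ and free containment makes the $n-1$ graph-lines through a vertex of $X_i$ pairwise distinct, \emph{every} line through a vertex of a freely contained $K_n$ is a graph-line of it -- and from this single lemma the classification of lines into the four families \eqref{SSP:lines1}--\eqref{SSP:lines4}, the perspectivities $\xi_{i,j}$, the bijectivity of the $\mu_i$, and the identification of $Z$ as a $\binkonf(n-m,+1)$-configuration all follow as you describe; the auxiliary facts you invoke (\ref{prop:cross-compl2}, \ref{prop:cross-compl-line}, the inclusion--exclusion count $|Z|=\binom{n-m+1}{2}$, the Vandermonde check on line numbers) are used correctly, and your degeneracy argument ruling out a common vertex of three graphs via \ref{prop:cross-compl-line} is valid. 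The only places where I would ask for a little more detail are (a) the claim that the lines among the centres are \emph{exactly} the triples $\{c_{i,j},c_{i,k},c_{j,k}\}$ -- you need to rule out a line through two centres with disjoint index pairs, which your rank lemma does give but which you do not state -- and (b) the partial-linearity check in the backward direction, which you fold into ``tedious verification''; neither is a gap in the idea, only in the write-up.
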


Combining the results of \cite{klik:binom} and \cite{STP-4}
it is not too hard to prove the following criterion
\begin{prop}\label{prop:SSP:degen}
  Let ${\goth M} = {m \bowtie^\mu_\xi {\goth B}}$
  for a $\binkonf(n-m,+1)$-configuration $\goth B$
  and a pair $(\mu,\xi)$ of suitable maps.
  The following conditions are equivalent
  \begin{sentences}\itemsep-2pt
  \item
    $\goth M$ freely contains at least $m+1$ $K_n$ graphs.
  \item
    $\goth M$ contains at least $m+1$ $\binkonf(n,0)$-subconfigurations. 
  \item
    There is $x_0\in X$ such that $\xi(i,j)(x_0) = x_0$ for each pair $i,j\in I$.
  \end{sentences}
\end{prop}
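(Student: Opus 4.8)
The plan is to split the three-way equivalence into the cheap part (a)$\Leftrightarrow$(b) and the substantial part (a)$\Leftrightarrow$(c), using the structure theorem of \cite{klik:binom} as the main engine for the latter. For (a)$\Leftrightarrow$(b) I would simply invoke Proposition \ref{pelny2horyzont}: a freely contained $K_n$-graph and its complementary $\binkonf(n,0)$-subconfiguration determine one another uniquely, so the two counts coincide and (a) holds exactly when (b) does.

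For (c)$\Rightarrow$(a) I would build the extra graph explicitly and then appeal to the structure theorem. Given $x_0\in X$ with $\xi_{i,j}(x_0)=x_0$ for all $i,j\in I$, I consider, in the coordinates $\sub_2(W)$ with $W=X\cup I$, the set $U=\{\{x_0,w\}\colon w\in W\setminus\{x_0\}\}$ (the ``star at $x_0$''); it has $n$ points and is distinct from the stars $X_1,\ldots,X_m$ at the elements of $I$. The hypothesis enters precisely here: for $i\neq j$ the line of type \eqref{SSP:lines3} through $(x_0,i)$ is $\{\{i,j\},(x_0,i),(\xi_{i,j}(x_0),j)\}=\{\{i,j\},(x_0,i),(x_0,j)\}$, so the two ``infinite'' vertices $(x_0,i),(x_0,j)$ of $U$ are joined, with third point $\{i,j\}\notin U$. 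I would then dispatch the remaining pairs (two vertices of $U$ in $Z$, joined through a line of $\goth B$; one in $Z$ and one of the form $(x_0,i)$, joined through a line of type \eqref{SSP:lines4}) together with the two ``no accidental incidence'' clauses of free containment. More cleanly, I would show that promoting $x_0$ from $X$ to the index set --- setting $I^\ast=I\cup\{x_0\}$, $X^\ast=X\setminus\{x_0\}$, and $\goth B^\ast$ the restriction of $\goth M$ to $\sub_2(X^\ast)$ --- exhibits an isomorphism $\goth M\cong (m+1)\bowtie^{\mu^\ast}_{\xi^\ast}\goth B^\ast$, whence the cited characterization yields $m+1$ freely contained $K_n$-graphs.

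For (a)$\Rightarrow$(c) I would run this backwards and read the fixed point off the geometry of an $(m+1)$-st graph $X_{m+1}$. By Proposition \ref{prop:cross-compl2}, $X_{m+1}$ meets each $X_i$ in a single vertex $c_i$. First I rule out $c_i\in S_i$: if $c_i=\{i,j\}$ then $\{i,j\}\in X_{m+1}\cap X_j$ forces $c_j=\{i,j\}=c_i$, so the three centers $X_i\cap X_j,\ X_i\cap X_{m+1},\ X_j\cap X_{m+1}$ coincide, contradicting Proposition \ref{prop:cross-compl-line}, which demands a genuine line of three distinct points. Hence $c_i=(x_i,i)\in Z_i$, and applying Proposition \ref{prop:cross-compl-line} to $X_i,X_j,X_{m+1}$ shows $\{i,j\},(x_i,i),(x_j,j)$ is a line, which by the form of \eqref{SSP:lines3} forces $\xi_{i,j}(x_i)=x_j$. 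What survives is an ``equivariant family'' $(x_i)_{i\in I}$, and the remaining task is to convert it into a single common fixed point $x_0$.

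The step I expect to be the main obstacle, in both directions, is the interaction of the fixed-point condition with the axis $\goth B$ and the bijections $\mu_i$: the $n-m$ vertices of the new graph that lie in $Z$ must form a freely contained $K_{n-m}$ inside $\goth B$, and each $\mu_i$ must carry the star at $x_i$ onto this one common set. I would control this by combining \cite{klik:binom} with the classification of \cite{STP-4}, which say exactly when such compatibility is available and allow one to normalise the presentation so that the equivariant family becomes the constant family $x_i\equiv x_0$. This normalisation is what turns the section produced in (a)$\Rightarrow$(c) into the literal fixed point demanded by (c), and dually guarantees in (c)$\Rightarrow$(a) that the restricted axis $\goth B^\ast$ is again a $\binkonf(n-m-1,+1)$-configuration, so that the structure theorem applies.
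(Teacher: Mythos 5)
The paper offers no argument for this proposition at all --- it is stated with a bare pointer to \cite{klik:binom} and \cite{STP-4} --- so there is no internal proof to measure yours against, and I can only assess your sketch on its own terms. The reduction (a)$\Leftrightarrow$(b) via \ref{pelny2horyzont} is fine, and so is the first half of your (a)$\Rightarrow$(c) analysis (ruling out $c_i\in S_i$ by \ref{prop:cross-compl2} and \ref{prop:cross-compl-line}, and extracting the relations $\xi_{i,j}(x_i)=x_j$). But the difficulty you flag at the end is not a technical loose end that a normalisation will absorb: it is the whole content of the statement, and with the construction exactly as defined in this paper it cannot be closed.

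Concretely, in the data of $m\bowtie^\mu_\xi{\goth B}$ the maps $\mu_i\colon\sub_2(X)\to Z$ are arbitrary bijections, unrelated to one another and to the geometry of $\goth B$. So in your candidate graph $U$ the pairs you propose to ``dispatch'' are exactly the ones that fail: $(x_0,i)$ is collinear with the points $\mu_i(\{x_0,b\})$, $b\in X\setminus\{x_0\}$, and for $U$ to be a free $K_n$ these $m$ images of the star $S(x_0)$ (in the sense of \eqref{def:gwiazda}) must all coincide and must form a free $K_{n-m}$ in $\goth B$; neither follows from (c). A minimal instance: $m=2$, $\goth B$ a single $3$-point line $Z=\{z_1,z_2,z_3\}$, $X=\{1,2,3\}$, $\xi_{1,2}=\id$ (so (c) holds for every $x_0$), $\mu_1$ arbitrary and $\mu_2=\sigma\circ\mu_1$ with $\sigma$ a $3$-cycle of $Z$. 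A third free $K_4$ would have to consist of $(y,1)$, $(y,2)$ for some $y\in X$ together with two common $Z$-neighbours of these, but $\mu_1(S(y))\cap\mu_2(S(y))=Z\setminus\{\mu_1(\overline{y}),\sigma\mu_1(\overline{y})\}$ is a single point for every $y$, so none exists. (Relabelling $X\times\{2\}$ converts this presentation into one with $\mu_1=\mu_2$ and $\xi_{1,2}$ a fixed-point-free $3$-cycle; so condition (c) is not even an invariant of $\goth M$, only of the chosen presentation.) The proposition is therefore true only under a compatibility normalisation of $(\mu,\xi)$ --- presumably part of what ``suitable maps'' imports from \cite{klik:binom} --- and for $|X|>3$ the discrepancies $\mu_j^{-1}\mu_i$ cannot in general be pushed into $\xi$, since not every permutation of $\sub_2(X)$ is induced by a permutation of $X$. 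Your proof needs to state and use that compatibility explicitly; without it the star at $x_0$ is not a complete graph, the promotion of $x_0$ into the index set does not yield a legitimate $(m+1)\bowtie$-presentation, and the equivariant family $(x_i)$ in (a)$\Rightarrow$(c) cannot be made constant.
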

From now on we assume that
\begin{ctext}
  $\goth B$ has $\sub_2(X)$ as its point set and
    ${\goth M} = {m \bowtie^\mu_\xi {\goth B}}$, 
  defined on the point set $\sub_2(W)$,
  freely contains exactly $m$ $K_n$-subgraphs;
\end{ctext}
this means that $\hipa(i) = \hipa(\{i\}|W\setminus\{i\})$ with $i\in I$ are the unique hyperplanes of 
$\goth M$ of the size $\binom{n}{2}$. 

From the above, \ref{prop:veldkam}, and \ref{rem:algdziel} we infer immediately
\begin{prop}\label{prop:hipinSSP:1}
  \begin{sentences}\itemsep-2pt
  \item\label{hipinSSP:sa} 
    Every set 
      $\hipa(J|W\setminus A)$ with $J\subset I$ is a hyperplane of $\goth M$.
      In particular, $\hipa(I|X)$ is a hyperplane of $\goth M$.
  \item\label{hipinSSP:brak}
    There is no $a\in X$ such that 
      $\hipa(\{ a \}|W\setminus \{\ a \})$ is a hyperplane of $\goth M$.
  \end{sentences}
\end{prop}


With the help of \eqref{wzor:hipcap} we get
\begin{equation*}
  \hipa(A\cup J \big| W\setminus(A\cup J)) = 
  \hipa(A|W\setminus A)\hipcap\hipa(J|W\setminus J)
\end{equation*}
for every $J\subset I$, $A\subset X$. So, from \ref{prop:hipinSSP:1} we get
\begin{ctext}
  if $J\subset I$, $A\subset X$ then \\ 
  $\hipa(A\cup J|W\setminus(A\cup J))$ is a hyperplane of $\goth M$ iff  
  $\hipa(A|W\setminus A)$ is a hyperplane of $\goth M$.
\end{ctext}
\begin{thm}\label{prop:hipinSSP:2}
  Let $A\subset X$. Then $\hipa(A|W\setminus A)$ is a hyperplane of $\goth M$ iff
  the following conditions are satisfied:
  \begin{sentences}\itemsep-2pt
  \item\label{dzhip1}
    $\sub_2(A)$ is a hyperplane of $\goth B$.
  \item\label{dzhip2}
    $A$ is invariant under every $\xi(i,j)$, $i,j\in I$.
  \item\label{dzhip3}
    $A$ is invariant under every $\mu_i$, $i\in I$, which means the following:
    if $\mu_i(x,y) = \{ u,v \}$ then $\{ x,y \}\subset A$ or $\{ x,y \}\subset X\setminus A$
    iff $\{ u,v \}\subset A$ or $\{ u,v \}\subset X\setminus A$.
  \end{sentences}
\end{thm}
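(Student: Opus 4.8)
The plan is to combine Theorem \ref{prop:hipinSSP:0} with a direct, line-by-line verification. By \ref{prop:hipinSSP:0} every hyperplane of $\goth M$ has the form $\hipa(A|W\setminus A)$ for some $A\subseteq W$, and by the displayed equivalence preceding the theorem (together with \eqref{wzor:hipcap}) the $I$-part of $A$ is irrelevant; hence it suffices to decide, for $A\subseteq X$, exactly when $H:=\hipa(A|W\setminus A)$ is a hyperplane. First I would record the concrete shape of $H$ on the point set $\sub_2(W)$. Splitting points into the three blocks $Z=\sub_2(X)$, the pairs $\{x,i\}$ in $X\times I$, and $\sub_2(I)$, a short computation from \eqref{def:dziel2hip} gives $H\cap Z=\sub_2(A)\cup\sub_2(X\setminus A)=\hipa(A|X\setminus A)$, next $\{x,i\}\in H$ iff $x\in X\setminus A$ (no such pair lies in $A$ since $A\subseteq X$), and finally $\sub_2(I)\subseteq H$ in all cases.

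The key reduction is that, since all lines have three points, $H$ is a hyperplane iff it is proper and meets every line in an \emph{odd} number of points (the crossing condition rules out $0$, and being a subspace rules out $2$; for triples "$1$ or $3$" is the same as "odd"). This lets me test each of the four line families \eqref{SSP:lines1}--\eqref{SSP:lines4} by a parity count, and each family will produce exactly one of the three conditions. For the lines \eqref{SSP:lines2} of $\GrasSpace(I,2)$ there is nothing to check: all their points lie in $\sub_2(I)\subseteq H$. For the lines \eqref{SSP:lines1}, which live entirely in $Z$, the odd-intersection requirement is precisely that $H\cap Z=\hipa(A|X\setminus A)$ cross every line of $\goth B$ in $1$ or $3$ points, i.e.\ that it be a hyperplane of $\goth B$ (or all of $Z$); this is condition \ref{dzhip1}.

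For a line \eqref{SSP:lines3}, $\{\{i,j\},(x,i),(\xi_{i,j}(x),j)\}$, the point $\{i,j\}$ always lies in $H$, while $(x,i),(\xi_{i,j}(x),j)$ lie in $H$ according as $x,\xi_{i,j}(x)\in X\setminus A$; so the count is odd iff $x\in A\Leftrightarrow\xi_{i,j}(x)\in A$, which is exactly the $\xi$-invariance of $A$, condition \ref{dzhip2}. For a line \eqref{SSP:lines4}, $\{(a,i),(b,i),\mu_i(\{a,b\})\}$, write $\mu_i(\{a,b\})=\{u,v\}\in Z$; then the number of its points in $H$ is $[a\in X\setminus A]+[b\in X\setminus A]+[\{u,v\}\subseteq A\text{ or }\{u,v\}\subseteq X\setminus A]$, and modulo $2$ this equals $[a,b\text{ on different sides of }A]+[u,v\text{ on the same side}]$. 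Oddness therefore holds iff $a,b$ lie on the same side of the partition $X=A\cup(X\setminus A)$ exactly when $u,v$ do, which is condition \ref{dzhip3}. Running the equivalence in both directions over all lines of each family then yields the theorem.

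I expect the main obstacle to be twofold: getting the parity bookkeeping for family \eqref{SSP:lines4} exactly right (there the $Z$-point $\mu_i(\{a,b\})$ interacts with the $X$-side of the partition, so \ref{dzhip3} is a genuine compatibility between $\mu_i$ and $A$, not a mere invariance), and handling the degenerate boundary values of $A$. In particular $A=\emptyset$ makes $H$ the whole space (excluded, as \ref{dzhip1} then fails), while $A=X$ gives $H=\hipa(X|I)=\hipa(I|X)$, which is always a hyperplane by \ref{prop:hipinSSP:1}\ref{hipinSSP:sa} and should be read through the "$H\cap Z=Z$" branch of \ref{dzhip1}; I would treat these cases separately so that the clean parity argument applies to the generic range $\emptyset\neq A\subsetneq X$.
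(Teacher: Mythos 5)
Your proof is correct and follows essentially the same route as the paper's: both write out $H$ explicitly on the point set $\sub_2(W)$ and check the four line families \eqref{SSP:lines1}--\eqref{SSP:lines4} one by one, extracting condition \eqref{dzhip1} from the first family, \eqref{dzhip2} from the third and \eqref{dzhip3} from the fourth, the second being trivial because $\sub_2(I)\subseteq H$. Your repackaging of ``proper subspace crossing every line'' as ``proper set meeting every $3$-point line in an odd number of points'' is a clean way to merge the paper's two separate directions for each family into one parity count, and your treatment of the boundary cases $A=\emptyset$ and $A=X$ matches the paper's reduction to $A\neq X$ via \ref{prop:hipinSSP:1}.
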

\begin{note}
  In some applications there is no 
  way to present, in a natural way, the underlying $\binkonf(n-m,+1)$-configuration $\goth B$ 
  as a structure defined on the family of 2-subsets of a $(n-m+1)$-element set;
  {\em natural}   from the point of view of the geometry of $\goth B$.
  Then one can take one of $\mu_i$'s as basic and replace $\goth B$ as its coimage under $\mu_i$
  defined on $\sub_2(X)$.
  Under this stipulation the condition \eqref{dzhip3} of \ref{prop:hipinSSP:2}
  is read as follows:
  \begin{enumerate}[(i')]\setcounter{enumi}{2}
  \item
    if $\mu_i(x,y) = \mu_j(u,v)$ for some $i,j\in I$ then $\{ x,y \}\subset A$ or $\{ x,y \}\subset X\setminus A$
    iff $\{ u,v \}\subset A$ or $\{ u,v \}\subset X\setminus A$.
  \end{enumerate}
\end{note}
\begin{proof}
  We use notation of the definition of a system of perspectives of simplices presented
  in this Section.
  The symbol $a \oplus b$ means the third point $\LineOn(a,b)\setminus\{ a,b \}$ on the line
  through $a,b$ (if the line exists).
  Note that 
    $\sub_2(W\setminus A) = \sub_2(I)\cup \sub_2(X\setminus A) \cup (X\setminus A)\boxtimes I$, 
  where
    $(X\setminus A) \boxtimes I := \big\{ \{ b,i \}\colon i\in I,\,b\in X\setminus A\big\}$. 
  Therefore,
  \begin{ctext}
    $H = \hipa(A \big| W\setminus A) = 
      \sub_2(A) \cup \sub_2(I) \cup \sub_2(X\setminus A) \cup  (X\setminus A)\boxtimes I$, 
  \end{ctext}
  where $A\subset X$.
  In view of \ref{prop:hipinSSP:1}\eqref{hipinSSP:sa} 
  without loss of generality we can assume that	$A\neq X$.
 \par
  Since $\sub_2(I)\subset H$,
  \begin{multline}\label{cz1}
    \text{if a line } L \text{ of } {\goth M} \text{  has two points common with } \sub_2(I) 
    \text{  then } L\subset H, \\    \text{and each line of }  {\goth M}  \text{ of the form \eqref{SSP:lines2} crosses }  H.
  \end{multline}
 \par\strut\quad
  Assume that $H$ is a hyperplane of $\goth M$.
  Since $\sub_2(X)$ is the point set of $\goth B$, and $\goth B$ is a subspace of $\goth M$ 
  right from definition, 
    $\hipa(A|X\setminus A) = \sub_2(A)\cup\sub_2(X\setminus A)$ 
  is a hyperplane of $\goth B$ or $\sub_2(X)=\sub_2(A)$.
  The latter  means $A=X$, which contradicts assumptions. So, \eqref{dzhip1} follows.
 \par
  On the other hand, converting the above reasoning we easily prove that \eqref{dzhip1} implies
  \begin{multline}\label{cz2} 
    \text{ if a line } L \text{ of } {\goth M} \text{  has two points common with } \sub_2(A) \cup \sub_2(X\setminus A) 
    \text{  then } L\subset H,  \\ \text{ and each line of } {\goth M} \text{ of the form 
    \eqref{SSP:lines1} crosses } H.
  \end{multline}
  \par\strut\quad
  Next, let us pass to the lines of $\goth M$ of the form \eqref{SSP:lines3}.
  Suppose that 
    $a\notin A$, $i_1,i_2\in I$, $i_1\neq i_2$. 
  Then
    $\{a,i_1\}, \{ i_1,i_2 \} \in H$, so
    $\{ a,i_1 \}\oplus \{ i_1,i_2 \} = \{ \xi(i_1,i_2)(a),i_2 \} \in H$.
  Consequently, $\xi(i_1,i_2)(a)\notin A$.
  This justifies condition \eqref{dzhip2}.
  \par
  Considering all the points expressible in the form 
    $\{ a,i_1 \}, \{ i_1,i_2 \}, \{ a',i_2 \}\in H$, i.e. with $a,a'\notin A$, $i_1,i_2\in I$, $i_1\neq i_2$  
  and afterwards considering their `product' 
    $\{ a,i_1 \} \oplus \{ i_1,i_2 \}$ and
    $\{ a,i_1 \} \oplus \{ a',i_2 \}$ 
  we see that, conversely, \eqref{dzhip3} implies  
  \begin{multline}\label{cz3} 
    \text{ if a line } L \text{ of } {\goth M} \text{  has two points common with } H,
    \text{ two in } (X\setminus A)\boxtimes I \\ \text{ or one in } (X\setminus A)\boxtimes I
    \text{ and the second in } \sub_2(I)
    \text{  then } L\subset H,  
    \\ \text{ and each line of } {\goth M} \text{ of the form 
    \eqref{SSP:lines3} crosses } H.
  \end{multline}
  \par\strut\quad
  Finally, we pass to the lines of $\goth M$ of the form \eqref{SSP:lines4}.
  Let $p = \{ a_1,a_2 \}\in \sub_2(X)$,   
  and $b\in X$, $i\in I$, $q = \{i,b\}$.
  Suppose $p,q$ are collinear in $\goth M$; this means 
  $\mu_i(b,b') = \{a_1,a_2\}$ for a point $b'\in X$
  and $r:= p \oplus q = \{ i,b' \}$. Set $d = \{ b,b' \}$.
  Assume that $p\in H$ 
  i.e. 
  $p\subset A$ or $p\subset X\setminus A$.
  Then $q \in H$ iff $r\in H$ i.e.
   $q \in (X\setminus A)\boxtimes I$
  iff 
   $r \in (X\setminus A)\boxtimes I$.
  Finally: $b\in A$ iff $b'\in A$,
  so  $d \subset A$ or $d \subset (X\setminus A)$ follows.
  \par
  Next, let $q\in H$. Then $p\in H$ iff $r\in H$ yields
  that the implication ($b,b'\in A$ or $b,b'\notin A$ $\implies$ $p \subset A$
  or $p\subset X\setminus A$) holds.
  So, finally, we have proved \eqref{dzhip3}.
 \par
  Converting the reasonings above we see that \eqref{dzhip3} implies
  \begin{multline}\label{cz4}
    \text{ if a line } L \text{ of } {\goth M} \text{  has two points common with } H,
    \text{ two in } (X\setminus A)\boxtimes I \\ \text{ or one in } (X\setminus A)\boxtimes I
    \text{ and the second in } \sub_2(A)\cup\sub_2(X\setminus A)
    \text{  then } L\subset H,  
    \\ \text{ and each line of } {\goth M} \text{ of the form 
    \eqref{SSP:lines4} crosses } H.
  \end{multline}
  Gathering together the conditions \eqref{cz1}, \eqref{cz2}, \eqref{cz3}, and \eqref{cz4} 
  we obtain that the conjunction 
  \eqref{dzhip1} \& \eqref{dzhip2} \& \eqref{dzhip3}
  implies that $H$ is a hyperplane of $\goth M$.
\end{proof}
There do exist \PSTS's which satisfy the assumptions \eqref{dzhip1}-\eqref{dzhip3};
as examples known in the literature we can quote quasi Grassmannians, comp. Subsect. 
\ref{ssec:qgras} and, in particular, \ref{thm:hipinqgras}.
Another class of examples is shown in \ref{exm:dzhip}.
\begin{exm}\label{exm:dzhip}
  Let $I = \{ 1,\ldots,m \}$, $X = \{ a,a,b,b' \}$ and
  ${\goth B} = \GrasSpace(X,2)$ be the Veblen configuration.
  Set 
    $\xi(i,j) = \xi(j,i)(a,a',b,b')=(a',a,b',b)$,
    $\mu_i(x,y) = \{x,y\}$
  for all $i,j\in I$, $x,y\in X$.
  Let us put
    ${\goth M} := m \bowtie^\mu_\xi {\goth B}$.
  Then $\goth M$ is a system of perspectives of $m$ tetrahedrons.
  It freely contains exactly $m$ graphs $K_{m+3}$, so it contains
  exactly $m$ hyperplanes of the form $\hipa(\{x\}|X\setminus\{x\}) = \hipa(x)$
  with $x\in I\cup X$. However, it contains the hyperplane
  $\hipa(\{ a,a' \} | \{ b,b' \} \cup I)$ which is not $\hipcap$-generated
  from the $\hipa(x)$'s.
\end{exm}

We close this section with a characterization of geometries on hyperplanes
  $\hipa(A\cup J|B\cup E)$ of $\goth M$, where
  $\{ A,B \}$ is a decomposition of $X$ and $\{ J,E \}$ is a decomposition of $I$.
So, let us assume that \eqref{dzhip1}-\eqref{dzhip3} of \ref{prop:hipinSSP:2} hold.
\begin{prop}
  Let $k = |J|$, $m = |I|$.
  \begin{sentences}\itemsep-2pt
  \item
   $\hipa(J|W\setminus J)$ is the union of the generalized Desargues configuration
   $\GrasSpace(J,2)$ and the system 
     $k \bowtie^{\mu\restriction J}_{\xi\restriction J\times J}{\goth B}$
   of perspectives of $k$ simplices $K_X$.
  \item
   $\hipa(A | W\setminus A)$ is the union of the restriction 
     ${\goth B}\restriction  \sub_2(A) =: {\goth B}'$
   and the system 
     $m \bowtie^{\mu'}_{\xi'} {{\goth B}'}$,
   where $\xi'(i,j) = \xi(i,j)\restriction A$ and
     $\mu'(i) = \mu(i)\restriction \sub_2(A)$
   for all $i,j\in I$,
   of perspectives of $m$ simplices $K_A$.
  \end{sentences}
\end{prop}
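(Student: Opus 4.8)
The plan is to compute, for each of the two statements, the point set and the line set of the hyperplane and to recognise the two announced pieces as the two halves into which the data of $\goth M = m\bowtie^\mu_\xi\goth B$ splits. Both statements are instances of a hyperplane $\hipa(C\mid W\setminus C)$ with $C=J\subset I$ (first case) or $C=A\subset X$ (second case), and by definition $\hipa(C\mid W\setminus C)=\sub_2(C)\cup\sub_2(W\setminus C)$; since $\{C,W\setminus C\}$ decomposes $W$, this is already a partition of the point set into two disjoint blocks. First I would identify each block. When $C=J$, the block $\sub_2(J)$ is the point set of the generalized Desargues configuration $\GrasSpace(J,2)$, while $\sub_2(W\setminus J)=\sub_2\!\big(X\cup(I\setminus J)\big)$ is the point set of the system of perspectives over $\goth B$ indexed by the complementary indices $I\setminus J$. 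When $C=A$, the block $\sub_2(A)$ is the point set of $\goth B\restriction\sub_2(A)$, and $\sub_2(W\setminus A)=\sub_2\!\big((X\setminus A)\cup I\big)$ is the point set of a perspective system over $\goth B\restriction\sub_2(X\setminus A)$ carried by the simplices $K_{X\setminus A}$ and the full index set. Thus the decomposition of points is immediate from the definition of $\hipa$.

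Next I would match the induced lines, reusing the case analysis from the proof of \ref{prop:hipinSSP:2}. The implications \eqref{cz1}--\eqref{cz4} already classify, family by family \eqref{SSP:lines1}--\eqref{SSP:lines4}, which lines of $\goth M$ are contained in $H$. I would check that on $\sub_2(C)$ the induced lines are exactly those coming from \eqref{SSP:lines2} (a Grassmann space) or from \eqref{SSP:lines1} (a restriction of $\goth B$), and that on $\sub_2(W\setminus C)$ they are exactly the four families of the smaller perspective system, whose structure maps are the restrictions $\xi{\restriction}$ and $\mu{\restriction}$ to the complementary set and whose axis is the corresponding restriction of $\goth B$. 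That these restrictions obey the defining axioms $\xi_{ii}=\id$, $\xi_{ij}=\xi_{ji}^{-1}$ and $\mu_i$ bijective is routine once one knows, from the invariance conditions \eqref{dzhip2} and \eqref{dzhip3}, that every $\xi_{ij}$ and every $\mu_i$ maps the complementary set onto itself, and from \eqref{dzhip1} that the relevant restriction of $\goth B$ is the intended subconfiguration.

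The hard part will be the ``no straddling line'' step: showing that $H$ is the union of the two pieces as incidence structures, i.e.\ that no line of $\goth M$ contained in $H$ meets both $\sub_2(C)$ and $\sub_2(W\setminus C)$. For $C=J$ this is automatic, since any line with a point in $\sub_2(J)$ forces its index pair into $J$ and then its remaining points, of type \eqref{SSP:lines2} or \eqref{SSP:lines3}, leave $H$ unless the whole line already lies in $\sub_2(J)$. For $C=A$ the two delicate families are \eqref{SSP:lines4} and \eqref{SSP:lines1}. A $\mu$-line $\{(a,i),(b,i),\mu_i(\{a,b\})\}$ inside $H$ has $a,b\in X\setminus A$, and the invariance of $A$ under $\mu_i$ from \eqref{dzhip3} keeps $\mu_i(\{a,b\})$ in $\sub_2(X\setminus A)$, so the line stays in the perspective block; and a line of $\goth B$ inside $H$ must not join $\sub_2(A)$ to $\sub_2(X\setminus A)$, which is precisely the assertion that the bipartite hyperplane $\sub_2(A)\cup\sub_2(X\setminus A)$ of $\goth B$ furnished by \eqref{dzhip1} splits into $\goth B\restriction\sub_2(A)$ and $\goth B\restriction\sub_2(X\setminus A)$. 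I expect to obtain this last splitting by applying the hyperplane structure theorem \ref{prop:hipinSSP:0} to $\goth B$ itself. Once straddling lines are excluded, comparing the surviving incidences with the definitions of $\GrasSpace(\cdot,2)$ and of the two $\bowtie$-systems completes the identification.
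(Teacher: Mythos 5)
The paper states this proposition without proof, so there is no argument of the authors' to compare yours against; the outline you give --- split the point set as $\sub_2(C)\cup\sub_2(W\setminus C)$, classify the induced lines family by family using \eqref{SSP:lines1}--\eqref{SSP:lines4}, then exclude straddling lines --- is surely the intended one. Note also that you have tacitly (and correctly) repaired the statement itself: since $X\boxtimes J$ and $A\boxtimes I$ are disjoint from the respective hyperplanes, the second constituent must be the perspective system indexed by the \emph{complementary} set $I\setminus J$ (hence of $m-k$ simplices), resp.\ the system of $m$ simplices $K_{X\setminus A}$ over ${\goth B}\restriction\sub_2(X\setminus A)$; your identification of the two point blocks is the right one, and your treatment of the lines of types \eqref{SSP:lines2} and \eqref{SSP:lines3} (using that $\xi(i,j)$ is a permutation of $X$ preserving $A$, hence also $X\setminus A$) is complete.

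There is, however, a genuine gap in your ``no straddling line'' step for the lines of types \eqref{SSP:lines4} and \eqref{SSP:lines1}. You assert that the invariance condition \eqref{dzhip3} ``keeps $\mu_i(\{a,b\})$ in $\sub_2(X\setminus A)$'' when $a,b\in X\setminus A$; but \eqref{dzhip3} is only a biconditional between ``$\{a,b\}$ lies in $\sub_2(A)\cup\sub_2(X\setminus A)$'' and ``$\mu_i(\{a,b\})$ lies in $\sub_2(A)\cup\sub_2(X\setminus A)$'' --- it does not force the two pairs onto the \emph{same} side. If some $\mu_i$ carries a pair of $\sub_2(X\setminus A)$ to a pair of $\sub_2(A)$, the line $\{(a,i),(b,i),\mu_i(\{a,b\})\}$ lies entirely inside $H$ and joins the two alleged constituents, so $H$ is not their union as an incidence structure; nothing in \eqref{dzhip1}--\eqref{dzhip3} rules this out. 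The same difficulty recurs for the lines of $\goth B$: Theorem \ref{prop:hipinSSP:0} describes only the \emph{point set} of a hyperplane as $\hipa(A|X\setminus A)$ and says nothing about whether a line of $\goth B$ inside it can meet both $\sub_2(A)$ and $\sub_2(X\setminus A)$ --- indeed, whether every hyperplane of a \BSTS\ splits into two mutually unconnected binomial pieces is precisely the problem the authors leave open in the final section. To close the argument you must either strengthen the hypotheses (each $\mu_i$ maps $\sub_2(X\setminus A)$ onto itself, and the two halves of the hyperplane of $\goth B$ are unconnected in $\goth B$) or supply a proof that these follow from the standing assumptions; as written, this step does not go through.
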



\section{Examples: structure of hyperplanes in \BSTS's of some known classes}
\label{sec:examples}


\subsection{Hyperplanes in generalized Desargues configurations}\label{ssec:gras}

Recall: a $\binkonf(n,0)$-configuration $\goth M$ freely contains $n$ graphs $K_{n-1}$
(the maximal possible amount) iff $\goth M$ is isomorphic to
the {\em generalized Desargues configuration} 
  $\GrasSpace(X,2) = \struct{\sub_2(X),\{ \sub_2(Z)\colon Z\in\sub_3(X) \}}$ 
for a $X$ with $|X| = n$ (cf. \cite{klik:binom}, \cite{STP-4}).
The class of generalized Desargues configurations appears in many applications,
even in physics: \cite{doliwa1}, \cite{doliwa2}.

\begin{thm}\label{thm:hipingras}
  Let  $H\subset \sub_2(X)$. Write ${\goth H} = \GrasSpace(X,2)$. 
  The following conditions are equivalent
  \begin{sentences}\itemsep-2pt
  \item\label{wwar1}
    $H$ is a hyperplane of $\goth H$.
  \item\label{hypa:forma:2}
    There is a proper non void subset $Z$ of $X$ such that 
    $H = \hipa(Z|X\setminus Z)$.   
  \end{sentences}
  Consequently, $\VSpace({\GrasSpace(n,2)}) = \projgeo(n-2,2)$ (comp. \cite{saniga}).
\end{thm}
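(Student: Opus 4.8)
The plan is to prove the two implications of the equivalence separately and then read off the Veldkamp space by counting hyperplanes. For the direction \eqref{hypa:forma:2}$\Rightarrow$\eqref{wwar1}, fix a proper nonempty $Z\subsetneq X$ and put $H=\hipa(Z|X\setminus Z)=\sub_2(Z)\cup\sub_2(X\setminus Z)$. Every line of $\goth H$ is $\sub_2(T)$ for some $T\in\sub_3(X)$, and its three points are the three $2$-subsets of $T$. Distributing the three elements of $T$ between the blocks $Z$ and $X\setminus Z$ leaves only two possibilities: either all three land in one block, and then $\sub_2(T)\subset H$, or the split is $2$--$1$, and then exactly one of the three pairs (the one inside the larger block) lies in $H$ while the two ``mixed'' pairs do not. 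Hence $H$ meets every line, and whenever it meets a line in two points it contains that line; so $H$ is a subspace crossing every line. It is proper, since for $a\in Z$, $c\in X\setminus Z$ the point $\{a,c\}$ is missing from $H$. Thus $H$ is a hyperplane.

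The converse is the heart of the matter, and rather than repeat the equivalence-class argument I would reduce it to \ref{prop:hipinSSP:0}. Fix $0\in X$ and put $X'=X\setminus\{0\}$; recall $\sub_2(X)$ is the point set of $\goth H$. Identifying each $x\in X'$ with $\{0,x\}$, the $n-1$ points $\{\,\{0,x\}\colon x\in X'\,\}$ carry a complete graph $K_{X'}$ freely contained in $\goth H$: two edges $\sub_2(\{0,a,b\})$ and $\sub_2(\{0,c,d\})$ coming from disjoint vertex pairs meet only when $\{0,a,b\}=\{0,c,d\}$, impossible for distinct $a,b,c,d$, and no three of the $\{0,x\}$ lie on a common line $\sub_2(T)$ because a $3$-element $T$ cannot contain the four-element set $\{0,a,b,c\}$. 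The remaining points $\{x,y\}$ with $x,y\in X'$ form $\GrasSpace(X',2)$, a $\binkonf(n-1,0)$-configuration, and the lines of $\goth H$ split exactly into the lines $\sub_2(T)$ with $0\notin T$ (the lines of $\GrasSpace(X',2)$) and the lines $\sub_2(T)$ with $0\in T$, which under the identification read $\{\,\{0,x\},\{0,y\},\{x,y\}\,\}$ and are precisely the lines $\{x,y,\mu(\{x,y\})\}$ for the identity bijection $\mu$. Thus $\goth H=K_{X'}+^\mu\GrasSpace(X',2)$, exactly in the format opening Section~\ref{sec:withgraf} (with that section's $W$, $X$, $\goth V$ being our $X$, $X'$ and $\GrasSpace(X',2)$). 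Now \ref{prop:hipinSSP:0} applies and yields $A\subseteq X$ with $H=\hipa(A|X\setminus A)$; since $H$ is proper, $A\neq\emptyset$ and $A\neq X$, so $Z:=A$ witnesses \eqref{hypa:forma:2}.

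For the dimension statement I would count. By the equivalence just proved the hyperplanes of $\goth H$ are exactly the sets $\hipa(Z|X\setminus Z)$ with $\emptyset\neq Z\subsetneq X$. By \eqref{wzor:hipcap} one has $\hipa(Z|X\setminus Z)=\hipa(X\setminus Z|Z)$, so the $2^{n}-2$ proper nonempty subsets of $X$ form $2^{n-1}-1$ complementary pairs $\{Z,X\setminus Z\}$; distinct pairs give distinct hyperplanes, since $\{a,b\}\in H$ holds iff $a,b$ lie in the same block, so the unordered pair $\{Z,X\setminus Z\}$ is recoverable from $H$. Hence there are exactly $2^{n-1}-1$ hyperplanes. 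Since by \ref{prop:veldkam} the Veldkamp space $\VSpace({\GrasSpace(n,2)})$ is a (possibly degenerate) binary projective space $\projgeo(d,2)$, which has $2^{d+1}-1$ points, we obtain $d+1=n-1$, i.e. $\VSpace({\GrasSpace(n,2)})=\projgeo(n-2,2)$.

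I expect the only delicate step to be the identification in the second paragraph: one must keep the two roles of the symbol $X$ apart — the index set of the Grassmannian, of size $n$, versus the complete-graph part in \ref{prop:hipinSSP:0}, of size $n-1$ — and verify that the line set of $\goth H$ decomposes \emph{exactly} as stated, so that $\goth H$ genuinely has the form $K_{X'}+^\mu{\goth V}$. Once this bookkeeping is in place, \ref{prop:hipinSSP:0} carries the entire converse with no further work.
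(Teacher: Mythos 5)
Your proposal is correct and follows essentially the same route as the paper: a direct case check on how a $3$-set $T$ distributes over the blocks $Z$, $X\setminus Z$ for \eqref{hypa:forma:2}$\Rightarrow$\eqref{wwar1}, an appeal to \ref{prop:hipinSSP:0} for the converse (the paper calls this step ``immediate''; you usefully spell out the identification ${\goth H}=K_{X'}+^\mu\GrasSpace(X',2)$ and the bookkeeping between the two roles of $X$), and the same count $2^{n-1}-1$ of complementary pairs to pin down the dimension.
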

\begin{proof}
  Let $H$ be as required in \eqref{hypa:forma:2} and let 
  $L =  \sub_2(A)$ for a $A\in\sub_3(X)$ be a line of $\goth H$. 
  If $A\subset Z$ or $A \subset X\setminus Z$ then 
  $L\subset H$. Assume that $A\not\subset Z,X\setminus Z$. Then
  there are $i,j\in A$, $i\in\notin Z$, $j\notin (X\setminus Z)$
  So: $i \in X\setminus Z$, $j\in Z$. Write $A=\{i,j,l\}$.
  If $l\in Z$ then $\{ j,l \}\in L\cap H$, if $l\in X\setminus Z$
  then $\{ i,l \}\in L\cap H$.
  Finally, we note that if $L\cap\sub_2(Z)\neq\emptyset$ then there is no point in
  $L\cap \sub_2(X\setminus Z)$. And similarly conversely.
  This proves that $H$ is a subspace of $\goth G$, so, finally, \eqref{wwar1} is valid.
  \par
  The implication \eqref{wwar1}$\implies$\eqref{hypa:forma:2} is immediate after \ref{prop:hipinSSP:0}.
  \par
  Finally, there are $\frac{2^n - 2}{2} = 2^{n-1} -1$ suitable decompositions of $X$; this determines
  $\dim({\VSpace({\GrasSpace(X,2)})})$.
\end{proof}
Recall after \cite{perspect} that each set 
\begin{equation}\label{def:gwiazda}
  S(i) = \{ a\in\sub_2(X)\colon i\in a \}
\end{equation}
is a complete graph freely contained in $\GrasSpace(X,2)$.
In consequence of \ref{rem:algdziel} and \ref{thm:hipingras},  
the hyperplanes 
  $\hipa(i) = \hipa(\{i\}|X\setminus\{i\})$ with $i\in X$
(`binomial hyperplanes' of $\GrasSpace(X,2)$) generate via the operation $\hipcap$
all the hyperplanes of $\GrasSpace(X,2)$.

Note that each of the two components 
  ${\cal A } = \sub_2(A)$ and  ${\cal A}' = \sub_2(A')$ with $A'=X\setminus A$ of
$\hipa(A|A')$ is a binomial configuration. These two components are {\em complementary} (unconnected) in
the following sense:
\begin{ctext}
  if $a\in {\cal A}$ and $a'\in{\cal A}'$ then 
  $a,a'$ are uncollinear in $\GrasSpace(X,2)$;
  ${\cal A}'$ consists of the points that are uncollinear with every point
  in ${\cal A}$, and conversely.
\end{ctext}


\subsection{Hyperplanes of quasi Grassmannians}\label{ssec:qgras}

First, we recall after \cite{skewgras} a construction of quasi Grassmannians.

Let us fix two sets:  
$Y$ such that $|Y| = 2(k-1)$ for an integer $k$ and 
$X_0$ such that $X_0\cap Y = \emptyset$,
  $X_0 = \{1,2\}$ or $X_0 = \{ 0,1,2 \}$.
We put $X = X_0 \cup Y$;
Then $n:= |X| = 2k$ or $n=2k+1$, resp.
The points of the {\em quasi Grassmannian $\vergras_n$} are the elements of $\sub_2(X)$. 
The lines of ${\goth R}_n$ are of two sorts:
the lines of  $\GrasSpace(X,2)$ which miss $\{ 1,2 \} =: p$ remain unchanged.
The class of lines of $\GrasSpace(X,2)$ through $p$
(i.e. the sets $\sub_2(Z)$ with $1,2\in Z\in\sub_3(X)$) 
is removed; instead, we add the following sets
  $\big\{ \{1,2\}, \{ 1,2j+2 \}, \{ 2,2j+1 \} \big\}$,
  $\big\{ \{1,2\}, \{ 1,2j+1 \}, \{ 2,2j+2 \} \big\}$
(we adopt a numbering of $Y$ so that $Y = \{ 3,4,5,6,\ldots,2k \}$).
It is seen that $\vergras_n$ is a $\binkonf(n,0)$-configuration.

\begin{prop}[{\cite{skewgras}}]
  The maximal complete $K_{n-1}$-graphs contained in $\vergras_n$ are 
  exactly all the sets  $S(i)$ (as defined by \eqref{def:gwiazda}) with $i\in X_0$.
\end{prop}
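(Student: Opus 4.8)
The plan is to prove the two inclusions separately: that every $S(i)$ with $i\in X_0$ is a maximal complete graph of $\vergras_n$, and that there are no others.

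First I would check that $S(i)=\{\{i,x\}\colon x\in X\setminus\{i\}\}$, a set of $n-1$ points, is a complete $K_{n-1}$ freely contained in $\vergras_n$ whenever $i\in X_0$. Two vertices $\{i,x\},\{i,y\}$ of $S(i)$ are joined in $\GrasSpace(X,2)$ by the line $\sub_2(\{i,x,y\})$, and the construction leaves this line untouched unless $\{1,2\}\subseteq\{i,x,y\}$. When $i\in\{1,2\}$ the affected case is a pair $\{p,\{i,m\}\}$ with $p=\{1,2\}$, and it is restored by one of the added twisted lines through $p$; when $i=0$ it is the single pair $\{0,1\},\{0,2\}$, still joined by the one line through $p$ that the construction leaves in place. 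In each case the third point of the joining line lies outside $S(i)$ and no line of $\vergras_n$ meets $S(i)$ in three points, so $S(i)$ is freely contained. Maximality is then automatic: $\vergras_n$ is a $\binkonf(n,0)$-configuration and so has only $\binom{n}{2}$ points, whereas by \ref{minSTS2pelny} a \PSTS\ freely containing $K_n$ needs at least $\binom{n+1}{2}$ of them; hence no freely contained complete graph has more than $n-1$ vertices.

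For the converse I would invoke \ref{prop:cross-compl2}: a $\binkonf(n,0)$-configuration is exactly a $\binkonf(n-1,+1)$-configuration, so that proposition, read with $n-1$ in place of $n$, shows that any two distinct freely contained $K_{n-1}$-graphs meet in exactly one point. Let $G$ be a maximal complete graph, necessarily a $K_{n-1}$, and suppose $G\neq S(i)$ for every $i\in X_0$. Then $G$ meets each $S(i)$, $i\in X_0$, in a single point, so among the $n-1$ two-element vertices of $G$ exactly one contains $1$, exactly one contains $2$, and, when $0\in X$, exactly one contains $0$; one first rules out $p\in G$, since every neighbour of $p$ lies in $S(1)\cup S(2)$ and $p\in G$ would force $G=\{p\}$. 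The remaining vertices of $G$ are thus $2$-subsets of $Y$; being pairwise collinear they can be joined only by unchanged Grassmann lines, so they pairwise share an element, and since no three of them may lie on a common line they form a star with a common core $y_0\in Y$. Collinearity of the vertex of $G$ containing $1$ (resp. containing $2$) with each of these subsets then forces both $\{1,y_0\}$ and $\{2,y_0\}$ into $G$. But for $y_0\in Y$ the line $\sub_2(\{1,2,y_0\})$ through $p$ has been deleted and is not restored, so $\{1,y_0\}$ and $\{2,y_0\}$ are not collinear, contradicting that $G$ is a clique.

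I expect the converse to be the main obstacle. The inserted twisted lines could a priori spawn maximal cliques unrelated to the stars, and excluding them rests on a careful local analysis of the modified collinearity graph around the points $\{1,m\},\{2,m\}$, $m\in Y$. The star-forcing argument above is clean as soon as $G$ has at least two vertices inside $\sub_2(Y)$, which holds once $n$ is large enough; the few small twisted configurations, where $\sub_2(Y)$ is too small to pin down a common core, must be examined separately, and there it is the freeness requirement --- no three vertices of $G$ on a single line --- that disposes of the residual triangular cliques.
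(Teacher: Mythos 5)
This proposition is imported from \cite{skewgras} with only a citation: the paper contains no proof of it, so there is nothing in the text to compare your argument against, and it has to be judged on its own. Your two-sided plan is the natural one, and the core of the converse --- deducing $|G\cap S(i)|=1$ from \ref{prop:cross-compl2} applied to the $\binkonf(n,0)=\binkonf(n-1,+1)$-configuration $\vergras_n$, excluding $p$, forcing the vertices of $G$ inside $\sub_2(Y)$ into a star with core $y_0$, and then contradicting the non-collinearity of $\{1,y_0\}$ and $\{2,y_0\}$ --- is correct and complete once $G$ has at least two vertices in $\sub_2(Y)$, i.e.\ for $n\geq 6$. One caveat before the main point: \ref{prop:cross-compl2} is a statement about \emph{freely} contained graphs, so your converse tacitly assumes the maximal $K_{n-1}$ under inspection is freely contained; that is the reading the paper intends (these graphs are exactly the complements of binomial hyperplanes), but if ``contained'' were read as ``is a clique of the collinearity graph'' the invocation would need a separate justification.

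The genuine gap is in the small cases you defer, which do not all behave as your last sentence predicts. For $n=4$ (so $k=2$, $Y=\{3,4\}$) the two added lines turn $\vergras_4$ into the unique configuration with $6$ points and $4$ three-point lines, hence $\vergras_4\cong\GrasSpace(4,2)$; its collinearity graph is $K_6$ minus a perfect matching, and e.g.\ $\{\{1,3\},\{2,4\},\{3,4\}\}$ is a maximal, freely contained $K_3$ that is neither $S(1)$ nor $S(2)$. So freeness does \emph{not} dispose of the residual triangles there: the statement itself holds only under the restriction to larger $k$ implicit in \cite{skewgras}, and any proof must make that restriction explicit. For $n=5$ the set $\sub_2(Y)$ is a singleton, so the star-forcing cannot start; one must instead use collinearity with the unique vertex of $G$ containing $0$ to force the three vertices meeting $X_0$ to be $\{0,u\},\{1,u\},\{2,u\}$ for a common $u\in Y$, and only then invoke the non-collinearity of $\{1,u\}$ and $\{2,u\}$. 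These are not routine verifications left to the reader; one of them refutes the unqualified claim and the other requires an argument you have not given.
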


Write
  $D(i):= \sub_2(X)\setminus S(i) = H(\{ i \}| X\setminus \{ i \})$.
Then the following is immediate
\begin{fact}\label{fct:bininqgras}
  When $n=2k+1$, then $D(0) \cong \vergras_{2k}$.
  For every $n$, $D(1)$ and $D(2)$ yield in $\vergras_n$ (binomial) subconfigurations
  isomorphic to $\GrasSpace(n-1,2)$.
\end{fact}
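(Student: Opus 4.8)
The plan is to realize each $D(i)$ as the complement of a star and then simply read off its induced line set. By the proposition recalled just above, for every $i\in X_0$ the star $S(i)$ is a maximal complete graph freely contained in $\vergras_n$; hence by \ref{pelny2horyzont} the complement $D(i)=\sub_2(X)\setminus S(i)=\sub_2(X\setminus\{i\})$ is a $\binkonf(n-1,0)$-subconfiguration and a \emph{subspace} of $\vergras_n$. Being a subspace, its lines are exactly those lines of $\vergras_n$ all of whose three points avoid $i$, so the whole task reduces to identifying these lines and matching them with the lines of the claimed target configuration. The one structural fact I would isolate first is that $\vergras_n$ and $\GrasSpace(X,2)$ coincide on every line missing the distinguished point $p=\{1,2\}$, and differ only among lines \emph{through} $p$: the Grassmannian pencil through $p$ is deleted and the twisted triples are inserted, and all of the latter contain $p$.

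For $D(1)$ and $D(2)$ this observation settles everything at once. Since $p$ contains both $1$ and $2$, we have $p\notin\sub_2(X\setminus\{1\})$ and $p\notin\sub_2(X\setminus\{2\})$, so no modified line can lie inside $D(1)$ or $D(2)$; conversely every Grassmannian line $\sub_2(Z)$ with $Z\in\sub_3(X\setminus\{1\})$ automatically misses $p$ (it cannot contain both $1$ and $2$), survives unchanged, and lies in $D(1)$. Hence the induced lines of $D(1)$ are precisely $\{\sub_2(Z)\colon Z\in\sub_3(X\setminus\{1\})\}$, i.e.\ $D(1)=\GrasSpace(X\setminus\{1\},2)\cong\GrasSpace(n-1,2)$. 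For $D(2)$ I would avoid repeating the computation and instead invoke the manifest symmetry of the construction under the transposition $1\leftrightarrow 2$: this permutation fixes $p$ and carries the first family of twisted triples onto the second, so it is an automorphism of $\vergras_n$ interchanging $S(1)$ and $S(2)$, whence $D(2)\cong D(1)\cong\GrasSpace(n-1,2)$.

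For $D(0)$, so $n=2k+1$, the point $p$ does \emph{not} contain $0$, hence $p\in D(0)$ and lines through $p$ must be examined. The twisted triples involve only points $\{1,2\}$, $\{1,t\}$, $\{2,t\}$ with $t\in Y$, none of which contains $0$; thus every twisted line lies in $D(0)$, and these are exactly the twisted lines of $\vergras_{2k}$ built on $\{1,2\}\cup Y$. Any other line of $\vergras_n$ through $p$ is forced to use a point $\{0,1\}$ or $\{0,2\}$, so it meets $S(0)$ and is excluded from $D(0)$ — which matches the deletion of the whole $p$-pencil in $\vergras_{2k}$. Finally, a line of $\vergras_n$ missing $p$ lies in $D(0)$ iff it is a Grassmannian line $\sub_2(Z)$ with $Z\in\sub_3(\{1,2\}\cup Y)$ and $p\not\subset Z$, which are precisely the unchanged lines of $\vergras_{2k}$. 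Since the two point sets coincide as well, $D(0)$ and $\vergras_{2k}$ have identical points and identical lines, giving $D(0)\cong\vergras_{2k}$.

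The hard part is purely the bookkeeping around $p$: one must verify that the surgery turning $\GrasSpace(X,2)$ into $\vergras_n$ is confined entirely to lines through $p$, so that it leaves the subgeometries on $\sub_2(X\setminus\{1\})$ and $\sub_2(X\setminus\{2\})$ completely untouched, while on $\sub_2(X\setminus\{0\})$ it restricts to exactly the surgery defining $\vergras_{2k}$. Once this localization is established the remaining work is a routine comparison of line sets, and the $1\leftrightarrow2$ symmetry disposes of the $D(2)$ half of the second claim with no further computation.
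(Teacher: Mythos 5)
Your verification is correct, and it is exactly the routine check the paper has in mind when it declares the Fact ``immediate'' without proof: the surgery producing $\vergras_n$ from $\GrasSpace(X,2)$ is localized at the pencil through $p=\{1,2\}$, so restricting to $\sub_2(X\setminus\{i\})$ either removes $p$ entirely (for $i=1,2$, leaving a pure Grassmannian) or restricts the surgery to the one defining $\vergras_{2k}$ (for $i=0$). Nothing is missing; the $1\leftrightarrow 2$ symmetry argument for $D(2)$ is a clean touch.
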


\begin{thm}\label{thm:hipinqgras}
  Set $T = \{ 2,3,\ldots,k \}$, $q_t = \{ 2t,2t-1 \}$ for $t \in T$.
  Then $Y = \bigcup_{t\in T} q_t$.
  The family 
  $\hipy(\vergras_n)$ consists of the sets
  \begin{equation}\label{eq:hipinqgras}
    \hipa\left(A \cup \textstyle{\bigcup_{t \in J} q_t} \Big| 
     (X_0\setminus A) \cup \textstyle{\bigcup_{s\in T\setminus J} q_s} \right)
  \end{equation}
  with arbitrary $A \subseteq X_0$, $J\subseteq T$ such that
  $(A,J) \neq (\emptyset,\emptyset),(X_0,T)$.
  Consequently,
    $\VSpace(\vergras_{2k}) = \projgeo(k-1,2)$ and
    $\VSpace(\vergras_{2k+1}) = \projgeo(k,2)$.
\end{thm}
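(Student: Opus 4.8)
The plan is to present $\vergras_n$ as a system of perspectives $m\bowtie^\mu_\xi{\goth B}$ and then to read off its hyperplanes from \ref{prop:hipinSSP:2}, feeding in \ref{thm:hipingras} for the hyperplanes of the underlying generalized Desargues configuration. By the Proposition quoted from \cite{skewgras} the maximal complete graphs of $\vergras_n$ are exactly the $m:=|X_0|$ stars $S(i)$, $i\in X_0$; hence, taken together with these $K_{n-1}$-graphs, $\vergras_n$ is covered by the structure theorem of \cite{klik:binom}, which yields $\vergras_n\cong m\bowtie^\mu_\xi{\goth B}$. First I would pin down the ingredients, the role of $W$ being played by the original index set $X=X_0\cup Y$ (so $|X|=n$): here $I=X_0$, the base set is $Y=X\setminus X_0$ with $|Y|=n-m=2(k-1)$, and ${\goth B}=\GrasSpace(Y,2)$. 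The last point holds because the lines of $\vergras_n$ lying inside $\sub_2(Y)$ are precisely the Desargues lines $\sub_2(Z)$, $Z\in\sub_3(Y)$, none of which passes through the special point $p=\{1,2\}$; thus the induced structure on $\sub_2(Y)$ is untouched by the twisting.

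The crux is reading off $\mu$ and $\xi$. Comparing the lines of the form \eqref{SSP:lines4} with the unmodified Desargues lines $\{\{a,b\},\{a,i\},\{b,i\}\}$ ($a,b\in Y$, $i\in X_0$; these never contain $p$) gives $\mu_i=\id$ for every $i\in I$. For $\xi$, the only perspective centre carrying twisted lines is $p$: matching the two families $\{p,\{1,2j+2\},\{2,2j+1\}\}$ and $\{p,\{1,2j+1\},\{2,2j+2\}\}$ against \eqref{SSP:lines3} with $\{i,j\}=\{1,2\}$ shows that $\xi(1,2)$ is the fixed-point-free involution $\sigma=\prod_{t\in T}(2t,\,2t-1)$ interchanging the two members of each pair $q_t$. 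All remaining centres are untwisted: in the odd case the lines through $\{0,1\}$ and $\{0,2\}$ are ordinary Desargues lines, so $\xi(0,1)=\xi(0,2)=\id$, while the surviving line $\sub_2(X_0)$ through $p$ and $0$ is simply the unique line of $\GrasSpace(X_0,2)$, of type \eqref{SSP:lines2}. Keeping the numberings $2j+1,2j+2$ and $q_t=\{2t,2t-1\}$ aligned is the one genuinely delicate point, and the place where I expect the only real danger of a slip.

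With this decomposition in hand I would invoke \ref{prop:hipinSSP:2}. Since $\sigma$ fixes no point of $Y$, the fixed-point criterion of \ref{prop:SSP:degen} fails, so $\vergras_n$ freely contains exactly $m$ graphs and the standing hypothesis of \ref{prop:hipinSSP:2} is met. By \ref{prop:hipinSSP:0} every hyperplane is of the form $\hipa(A|X\setminus A)$, $A\subseteq X$; writing $A=(A\cap X_0)\cup A_Y$ with $A_Y:=A\cap Y$ and using the reduction preceding \ref{prop:hipinSSP:2} (the $I$-part is free) it remains to test $A_Y$. Condition \eqref{dzhip3} is vacuous because $\mu_i=\id$; condition \eqref{dzhip1} holds for every proper nonempty $A_Y$ by \ref{thm:hipingras} applied to ${\goth B}=\GrasSpace(Y,2)$; and condition \eqref{dzhip2} says exactly that $A_Y$ is $\sigma$-invariant, i.e.\ a union of the pairs $q_t$. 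This produces precisely the sets \eqref{eq:hipinqgras}, the excluded pairs $(\emptyset,\emptyset)$ and $(X_0,T)$ corresponding to the empty set and to the whole point set.

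For the two Veldkamp dimensions I would count with \eqref{wzor:hipcap}: the admissible ``left parts'' $A\cup\bigcup_{t\in J}q_t$ form, under symmetric difference, an elementary abelian $2$-group with one generator per point of $X_0$ and one per pair $q_t$, hence of rank $|X_0|+|T|=|X_0|+(k-1)$. Discarding $\emptyset$ and the top element and then identifying each set with its complement (a fixed-point-free involution) leaves $2^{\,|X_0|+k-2}-1$ hyperplanes, so by \ref{rem:algdziel} the Veldkamp space is $\projgeo(|X_0|+k-3,2)$. Substituting $|X_0|=2$ for $n=2k$ and $|X_0|=3$ for $n=2k+1$ gives $\VSpace(\vergras_{2k})=\projgeo(k-1,2)$ and $\VSpace(\vergras_{2k+1})=\projgeo(k,2)$, as claimed.
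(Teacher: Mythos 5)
Your proposal is correct and follows essentially the same route as the paper: represent $\vergras_n$ as a system of perspectives $m\bowtie^\mu_\xi\GrasSpace(Y,2)$ with $\mu_i=\id$ and $\xi(1,2)$ the involution swapping the members of each pair $q_t$, then read off the hyperplanes from \ref{prop:hipinSSP:0} and \ref{prop:hipinSSP:2} and count. Your version is, if anything, slightly more careful (explicitly invoking \ref{prop:SSP:degen} for the standing hypothesis and \ref{thm:hipingras} for condition \eqref{dzhip1}); the only nit is that the excluded pair $(\emptyset,\emptyset)$ gives the whole point set $\sub_2(W)$ rather than the empty set.
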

\begin{proof}
  An elementary computation shows that each set of the form \eqref{eq:hipinqgras}
  is a hyperplane of $\vergras_n$.  
  Let us represent $\vergras_n$ as a suitable system of perspectives.
  Define, first, for $i\in X_0$ and distinct $x,y\in Y$: 
    $\mu_i(\{ x,y \}) = \{ x,y \}$.
  Next, we set 
    $\xi(1,2)(2j+1,2j+2) = (2j+2,2j+1)$ for every $j=1,...,k-1$
  and 
    $\xi(1,0) = \xi(2,0)(x) = x$ for $x\in Y$, if $0\in X_0$. 
  We have obtained two maps 
    $\mu\colon X_0\longrightarrow {\sub_2(Y)}^{\sub_2(Y)}$
  and 
    $\xi \colon X_0\times X_0 \longrightarrow S_{Y}$.
  It is seen that 
    $\vergras_n \cong m \bowtie_{\xi}^{\mu}\GrasSpace(X_0,2)$.
  A $\xi$-invariant subset of $Y$ is the union of several sets of the form 
  $q_t$, $t \in T$. It is seen that such a union is $\mu$-invariant.
  From \ref{prop:hipinSSP:2} we infer that each hyperplane of $\vergras_n$
  has form \eqref{eq:hipinqgras}.

  To complete the proof it suffices to note that there are $2^{|X_0|}$ decompositions
  of $X_0$, $2^{k-1}$ decompositions of $T$, and 
  $\frac{2^{|X_|+k-1}}{2} - 1 = 2^{|X_0|+k-2} -1$ decompositions of $W$ which yield a 
  hyperplane. Substituting $|X_0|=2$ and $|X_0|=3$ we get 
  $|\hipy(\vergras_{2k})| = 2^k - 1$ and
  $|\hipy(\vergras_{2k+1})| = 2^{k+1} - 1$,
  which closes the proof.
\end{proof}


\subsection{Hyperplanes of multi-veblen configurations}\label{ssec:mveb}

Recall another fact: a $\binkonf(n,0)$-configuration freely contains $n-2$ graphs $K_{n-1}$
if it is a {\em (simple) multi-veblen configuration}. 
The multi-veblen configurations can be also defined by means of a direct construction.
Let us recall, briefly, after \cite{pascvebl} this construction.

Let $X$ be an $n$-set disjoint with a two-element set $p$ and $\cal P$ be a graph defined on $X$.
The points of the configuration
${\bf M}(X,p,{\cal P})$ are the following:
$p$, $s_i$ with $s\in p$, $i\in X$, and $c_{i,j}$ with
$\{ i,j\}\in\sub_2(X)$.
The lines are: the sets of the form
$\{ p,a_i,b_i \}$, the sets 
  $\{ a_i,a_j,c_{i,j} \}$, $\{ b_i,b_j,c_{i,j} \}$ for $\{i,j\}\in{\cal P}$, $p = \{ a,b \}$,	
and
  $\{ a_i,b_j,c_{i,j} \}$, $\{ b_i,a_j,c_{i,j} \}$ for $\{i,j\}\notin{\cal P}$, $p = \{ a,b \}$,
finally: 
  the sets $c_u,c_v,c_w$, 
where 
  $\{ u,v,w \}$ is a line of $\GrasSpace(X,2)$.
It is seen that after the identification 
  $s_i \leftrightarrow \{ s,i \}$ for $s\in p,\;i\in X$
and
  $c_{i,j}\leftrightarrow \{ i,j\}$ for $i,j\in X$
the structure
${\bf M}(X,p,{\cal P})$ can be defined on the set
$\sub_2(X\cup p)$ and then the configuration  
is easily seen to be a $\binkonf(n,2)$-configuration.
One can observe that each of the sets
$$
  S(i)  = \{ s_i,c_{i,j}\colon j\in X\setminus\{ i\},\;s\in p \}
    \leftrightarrow \{ q\in\sub_2(X\cup p)\colon i\in q \}
$$
is a complete $K_{n+1}$-graph freely contained in ${\bf M}(X,p,{\cal P})$.
Moreover, the complement 
$$
  H(i) = \{p,s_j,c_{j,l}\colon s\in p,\;j,l\in X\setminus\{i\}
    \leftrightarrow \{ q\in\sub_2(X\cup p)\colon i\notin q \}
$$
of $S(i)$ is a multi-veblen configuration
  ${\bf M}(X\setminus\{ i \},p,{\cal P}\restriction (X\setminus\{ i \}))$.
So, we can write simply 
  $H(i) = \hipa(\{i\}|X\setminus\{ i \})$.
It is known that a mutliveblen $\binkonf(n,2)$-configuration is either a generalized
Desargues configuration or it has exactly $n$ maximal freely contained complete graphs.
\begin{prop}\label{prop:hipinmveb}
  Assume that ${\bf M}(X,p,{\cal P}) =: {\goth M}$ is not a generalized Desargues
  configuration. Then every binomial hyperplane of $\goth M$ has the form
    $\hipa(\{ i \}|X\cup\{p\}\setminus\{i\})$ with $i\in X$.
  Each hyperplane of $\goth M$ has form
    $\hipa(A|(X\setminus A)\cup p)$
  for $\emptyset\neq A\subset X$.
\end{prop}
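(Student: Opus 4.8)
The plan is to apply the general criterion \ref{prop:hipinSSP:2} to the multi-veblen configuration, treating it as a system of perspectives, and then to exploit the combinatorial rigidity of the $\mu_i$'s and $\xi(i,j)$'s in this special case. First I would record that $\goth M = {\bf M}(X,p,{\cal P})$ is, by the identification $s_i\leftrightarrow\{s,i\}$ and $c_{i,j}\leftrightarrow\{i,j\}$, a $\binkonf(n,2)$-configuration defined on $\sub_2(X\cup p)$ that freely contains exactly the $n$ complete graphs $S(i)$, $i\in X$ (here using the hypothesis that $\goth M$ is \emph{not} a generalized Desargues configuration, so that $m=n$ and there are no extra freely contained graphs). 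Thus $\goth M$ plays the role of an $m\bowtie^\mu_\xi{\goth B}$ with index set $I=X$ and base $\goth B$ obtained by restricting to the relevant $2$-subsets; here the role of the parameter set that was called $X$ in Section~\ref{sec:withgraf} is played by $p=\{a,b\}$, a two-element set.

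The binomial part is immediate: by \ref{prop:hipinSSP:1}\eqref{hipinSSP:sa} and the description of $H(i)$ as the complement of $S(i)$, each $\hipa(\{i\}|(X\cup\{p\})\setminus\{i\})$ with $i\in X$ is a hyperplane, and these are exactly the $n$ complements of the maximal freely contained graphs $S(i)$; no $\hipa(\{s\}|\cdots)$ with $s\in p$ is a hyperplane by \ref{prop:hipinSSP:1}\eqref{hipinSSP:brak}. So the first assertion follows once I identify the $S(i)$ as the maximal freely contained graphs, which is recorded in the text preceding the statement.

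For the second assertion I would invoke \ref{prop:hipinSSP:2} with the roles above: a hyperplane has the form $\hipa(A\cup J|\cdots)$ where $A\subseteq p$ and $J\subseteq X$, and since $J$ ranges freely (it indexes the trivial directions $\hipa(J|\cdots)$ by \ref{prop:hipinSSP:1}), the nontrivial content is which $A\subseteq p$ survive. But $p$ has only two elements, so the three invariance conditions \eqref{dzhip1}--\eqref{dzhip3} of \ref{prop:hipinSSP:2} force $A$ to be $\emptyset$ or all of $p$ (a singleton $A\subseteq p$ would by \eqref{dzhip3} and the structure of the lines $\{a_i,b_i,p\}$ and $\{a_i,b_j,c_{i,j}\}$ have to be both fixed and swapped by the perspectivities, which is impossible), and then absorbing $p$ into the complement and relabelling the free $X$-directions as $A$, every hyperplane takes the stated form $\hipa(A|(X\setminus A)\cup p)$ with $\emptyset\neq A\subseteq X$.

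The main obstacle I anticipate is purely bookkeeping: the two set-theoretic identifications (multi-veblen coordinates $s_i,c_{i,j}$ versus the $\sub_2(X\cup p)$ picture, and the system-of-perspectives indexing in which the former parameter set becomes the two-element $p$) must be lined up carefully so that conditions \eqref{dzhip2} and \eqref{dzhip3} are read against the correct line types $\{a_i,a_j,c_{i,j}\}$, $\{a_i,b_j,c_{i,j}\}$ depending on whether $\{i,j\}\in{\cal P}$. Verifying that the graph ${\cal P}$ does not introduce additional admissible decompositions — i.e.\ that the only $\xi$- and $\mu$-invariant subsets come from unions of $X$-directions together with all or none of $p$ — is where the hypothesis that $\goth M$ is not a generalized Desargues configuration is actually used, and that is the step I would write out in full.
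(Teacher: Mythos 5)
Your proposal follows essentially the same route as the paper: present ${\bf M}(X,p,{\cal P})$ as the system of perspectives $n\bowtie^\mu_\xi\GrasSpace(p,2)$ with index set $X$ and two-element parameter set $p$, then apply \ref{prop:hipinSSP:0}, \ref{prop:hipinSSP:1} and \ref{prop:hipinSSP:2}, using the non-Desargues hypothesis (the paper cites \ref{prop:SSP:degen} here) to exclude a singleton invariant subset of $p$. The only loose spot is your phrase that a singleton would have to be ``both fixed and swapped'': the precise point is that invariance of $\{a\}$ forces every $\xi(i,j)$ to fix $a$, i.e.\ ${\cal P}$ complete, i.e.\ $\goth M$ generalized Desargues, contradicting the hypothesis --- which is exactly the step you rightly flag as the one to write out in full.
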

\begin{proof}
  It suffices to present $\goth M$ in the form $n\bowtie_\xi^\mu \GrasSpace(p,2)$.
  Indeed, we observe, first, that $\GrasSpace(p,2)$ is a trivial structure with a single point and no line.
  Next, we put $\mu_i(a_i,b_i) = p$ for all $i\in I$, $\{a,b\} = p$.
  Finally, 
    $\xi(i,j)(a,b) = (a,b)$ when $\{ i,j \}\in{\cal P}$ and $\xi(i,j)(a,b) = (b,a)$ otherwise.
  Comparing definitions we see that 
    ${\bf M}(X,p,{\cal P})\cong n\bowtie_\xi^\mu \GrasSpace(p,2)$. 
  \par
  To complete the proof we make use of \ref{prop:hipinSSP:0} and \ref{prop:hipinSSP:2}: 
  a hyperplane of $\goth M$ has form
    $\hipa(J|(I\setminus J)\cup p) \hipcap \hipa(A|(p\setminus A)\cup I)$ 
  for a subset $J$ of $I$ and
  an ($\mu,\xi$)-invariant subset $A$ of $p$. 
  From \ref{prop:SSP:degen} we get that a non void proper subset of $p$ is a one-element set, and
  such a subset of $p$ is invariant
  only when $\goth M$ is a generalized Desargues configuration, which closes our proof.
\end{proof}
Let us apply \ref{prop:hipinmveb} to the particular case 
  ${\cal P} = N_X$ (the empty graph defined on $X$);
it is known that ${\bf M}((X,p,{\cal P})$ is 
the structure $\VerSpacex(3,|X|)$ dual to
the combinatorial Veronesian $\VerSpace(X,3)$ (see \cite{combver} and Section \ref{ssec:ver}).
So, $\VerSpacex(n,3)$  has all its binomial hyperplanes of the same
geometrical type: the dual Veronesian $\VerSpacex(n-1,3)$.


\subsection{Hyperplanes of combinatorial Veronesians}\label{ssec:ver}

Next, let us pay attention to the class of combinatorial Veronese spaces
defined in \cite{combver}.  
  Write $X: = \{ a,b,c \}$ for pairwise distinct $a,b,c$.
  Generally, if $f = a^i b^j c^m$ is a multiset with the elements in $X$ we put $|f| = i+j+m$.

The {\em combinatorial Veronese space}
  $\VerSpace(3,k) = \VerSpace({\{ a,b,c \}},k)$
  is the configuration
  whose points are the multisets $a^i b^j c^m$, $i+j+m = k$:
  the elements of $\msub_k(\{ a,b,c \})$, and
  whose lines have form $e  X^i$, $i + |e| = k$.
It is a 
  $\binkonfo(k,2)$-configuration. 
  \par
  It is known that
    $\VerSpace(X,2) \cong \GrasSpace(4,2)$ 
  so the hyperplanes of $\VerSpace(X,2)$ are, generally, known.
  \par
  Let ${\goth M} = \VerSpace(X,k)$.
  It is known (cf. \cite{veradjac}, \cite{klik:binom}) that
  $K_{k+1}$-graphs freely contained in $\goth M$ have form $\msub_k(A)$,
  where $A\in\sub_2(X)$, the complement of such a graph is the set 
  $z \msub_{k-1}(X)$, where $\{z\} = X\setminus A$, 
  so it yields a (binomial) subspace of $\VerSpace(X,k)$
  isomorphic to $\VerSpace(X,k-1)$.
\begin{rem}
  Note that the set
  $H = \{ a^2c, b^2a, c^2b \}$ yields a hyperplane in every  Veblen subconfiguration  
  contained in $\VerSpace(3,3)$, but 
  $H$ {\em  is not } a hyperplane of $\VerSpace(3,3)$:  
  a 3-element {\em \bf anticlique} of a $10_3$-configuration 'suffices'
  for at most $3\times 3 = 9$  lines only (i.e. at most $9$ lines intersect such a 3-set).
\end{rem}

Let us generate via $\hipcap$ the hyperplanes, starting form the
binomial hyperplanes of a $\VerSpace(3,k)$.  
\begin{itemize}\itemsep-2pt
\item
  There are three hyperplanes 
  $H_1(u) = u\msub_{k-1}(X)$ and three their complements 
    $\overline{H_1(u)}  = \msub_k(\{ x,y \})$,
  $X = \{ u,x,y \}$.
\item
  Let us compute: 
    $H_1(x)  \hipcap H_1(y) = 
     \{u^k\} \cup x y \msub_{k-2}(X) =: H_2(u)$,
  where $X = \{ x,y,u \}$.
  \par
  The complement of $H_2(u)$ has the form 
    $\overline{H_2(u)} = x\msub_{k-1}(\{u,x\}) \cup y\msub_{k-1}(\{u,y\})$.
\item
  Let us  compute: 
    $H_2(a) \hipcap H_2(b) = H_2(c)$.
\item
  The properties of $\hipcap$ yield 
  $H_1(x)\hipcap H_2(y) = H_1(u)$ for $x\neq y$
  and $X = \{ x,y,u \}$.
\item
  Let us compute again:
    $H_1(x) \hipcap H_2(x) = X^k \cup abc\msub_{k-3}(X)$.
\end{itemize}
We have got seven hyperplanes of $\VerSpace(X,k)$. 
\begin{thm}\label{thm:hipinver}
  The above are  {\em\bf all   the} hyperplanes of $\VerSpace(X,k)$.
  So, $\VSpace({\VerSpace(3,k)}) = \projgeo(2,2)$.
\end{thm}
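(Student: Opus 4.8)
The plan is to show that the seven hyperplanes just produced exhaust $\hipy(\VerSpace(X,k))$. By Proposition~\ref{prop:veldkam} the Veldkamp space is some $\projgeo(n,2)$, and the seven listed sets already form a $\projgeo(2,2)$ inside it: the three binomial ones $H_1(a),H_1(b),H_1(c)$ are $\hipcap$-independent because, reading each in complement coordinates (where $\hipcap$ becomes symmetric difference), the monomial $a^{k-1}b$ lies in an odd number of the three complementary complete graphs $\msub_k(\{b,c\}),\msub_k(\{a,c\}),\msub_k(\{a,b\})$. Hence it remains only to prove $\dim\VSpace(\VerSpace(X,k))\le 2$, i.e. that there is no eighth hyperplane; I would do this by a complete census based on Theorem~\ref{prop:hipinSSP:0}.

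First I would place $\VerSpace(X,k)$ in the scope of Theorem~\ref{prop:hipinSSP:0}. The maximal complete graph $\msub_k(\{b,c\})$ is a freely contained $K_{k+1}$ whose complement is the subspace $\VerSpace(X,k-1)$, so $\VerSpace(X,k)=K_Y+^\mu\VerSpace(X,k-1)$ with $|Y|=k+1$ --- exactly the shape $K_X+^\mu{\goth V}$ governed by that theorem. After the induced identification of the point set with $\sub_2(W)$, $|W|=k+2$, every hyperplane is $\hipa(A|W\setminus A)$ for some $A\subseteq W$. I would then read $A$ as a two-colouring of $W$: a point (a $2$-subset) lies in $\hipa(A|W\setminus A)$ iff it is monochromatic, and $\hipa(A|W\setminus A)$ is a hyperplane iff every line carries an \emph{odd} number of monochromatic points (it must meet the set, and if it meets it twice it must lie inside).

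Next I would translate the Veronesian lines $\{ea^i,eb^i,ec^i\}$ into this coordinatisation and split them in two. When the two ``shifts'' coincide the line becomes an ordinary triple $\sub_2(\{x,y,z\})$ (the image of a three-term progression, or a triple through the distinguished element of $W$); such a triangle has an odd number of monochromatic pairs for \emph{every} colouring, so it imposes nothing, exactly as in Theorem~\ref{thm:hipingras}. The remaining ``twisted'' lines sit on four distinct elements and form a path $w-u-(w+i)-(u+i)$; the odd-parity condition on a $3$-edge path is equivalent to its endpoints sharing a colour, so each twisted line forces one explicit pair of $W$-elements to be monochromatic. Letting the parameters range, these forced identifications fuse the $k-1$ interior elements of $W$ into a single colour class and leave exactly three elements free --- and these three are precisely the carriers of $H_1(a),H_1(b),H_1(c)$. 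Admissible colourings therefore depend on four independent binary choices, giving $(2^{4}-2)/2=7$ hyperplanes and $\VSpace(\VerSpace(X,k))=\projgeo(2,2)$.

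The delicate step, and the main obstacle, is this last count, because it is correct only under the identification of points with $\sub_2(W)$ that is \emph{induced by the chosen complete graph} via Theorem~\ref{prop:hipinSSP:0}; under a naive stars-and-bars labelling the twisted lines fuse the wrong elements and one is led to a false answer with too few classes. So the real work is to pin down the correct labelling, to verify that each twisted line forces exactly the stated monochromatic pair, and to check that the totality of these constraints connects all $k-1$ interior elements while isolating exactly three. As an independent check I would run induction on $k$, with base $\VerSpace(X,2)\cong\GrasSpace(4,2)$ (Theorem~\ref{thm:hipingras} gives $\projgeo(2,2)$) and inductive step controlled by the restriction $H\cap\VerSpace(X,k-1)$ that appears in the proof of Theorem~\ref{prop:hipinSSP:0}.
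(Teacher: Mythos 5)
Your route is genuinely different from the paper's, and as far as I can check it does work. The paper never applies Theorem~\ref{prop:hipinSSP:0} to a single complete graph of $\VerSpace(X,k)$; instead it exhibits $\VerSpace(X,k)$ as a system of perspectives $3\bowtie^\mu_\xi{\goth B}$ of three simplices $F_a,F_b,F_c$ (each a copy of $K_Z$, $Z=\{1,\ldots,k-1\}$) over the common axis ${\goth B}=abc\VerSpace(X,k-3)$, with $\xi_{x,y}(s)=k-s$, and then invokes Theorem~\ref{prop:hipinSSP:2}: any hyperplane $\hipa(A|\cdot)$ with $A\subset Z$ forces $A$ to be $(\mu,\xi)$-invariant, and the chain $1\in A\Rightarrow k-1\in A\Rightarrow 2\in A\Rightarrow k-2\in A\Rightarrow\cdots$ shows the only such $A$ is $Z$, leaving exactly the $2^3-1=7$ hyperplanes generated by the three binomial ones. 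The paper's approach buys the fact that all line-by-line checking has been done once and for all in \ref{prop:hipinSSP:2}; your approach buys elementarity --- only the easy Theorem~\ref{prop:hipinSSP:0} plus parity bookkeeping --- at the price of an explicit coordinatisation of $\msub_k(X)$ by $\sub_2(W)$.

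That coordinatisation is precisely what you defer as ``the real work,'' and you must not leave it as an announcement: it is the entire content of the proof, and without it the count of colour classes is an unsupported assertion. To close the gap: take $Y=\msub_k(\{b,c\})$, $W=\{0\}\cup Y$, and write $w_s=b^sc^{k-s}$. The identification induced by $K_Y$ sends $b^jc^m$ to $\{0,w_j\}$ and $a^ib^jc^m$ (with $i\geq 1$) to $\{w_j,w_{i+j}\}$, because $a^ib^jc^m$ is the third point of the line through $w_{i+j}=b^{i+j}c^m$ and $w_j=b^jc^{i+m}$. A line $\{ea^i,eb^i,ec^i\}$ with $e=a^pb^qc^r$ then becomes the triangle $\sub_2(\{0,w_q,w_{q+i}\})$ when $p=0$, the triangle on the progression $w_q,w_{p+q},w_{2p+q}$ when $p=i$, and otherwise the $3$-edge path with consecutive vertices $w_{q+i},\,w_{p+q+i},\,w_q,\,w_{p+q}$, whose odd-parity condition is exactly $w_{q+i}\whip w_{p+q}$. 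Ranging over $p,i\geq 1$, $q\geq 0$, $p+q+i\leq k$, $p\neq i$, these constraints produce every pair from $\{w_1,\ldots,w_{k-1}\}$ and no others, so the interior elements fuse into one class while $0$, $w_0=c^k$, $w_k=b^k$ stay free (carrying $H_1(a)$, $H_1(b)$, $H_1(c)$ respectively), and $(2^4-2)/2=7$ follows. With that paragraph actually written out --- including the degenerate cases $p=0$ and $p=i$, which you gesture at but do not isolate --- your argument is complete and is a legitimate alternative to the paper's.
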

\begin{proof}\def\porz{\prec} \def\triop{\oplus}
  In the first step we present ${\goth M} := \VerSpace(X,k)$ as a system of perspectives of simplices. 
  Recall that   $\VerSpace(X,k)$ is a $\binkonf(k,2)$-configuration.
  In what follows we shall keep a fixed cyclic order $\porz$, say $(a \porz b \porz c \porz a)$ of the elements of $X$.
  Note that $\GrasSpace(X,2) \cong \VerSpace(X,1)$ is a single 3-element line.
  Set ${\goth B} = abc\VerSpace(X,k-3)$, it is a $\binkonf(k,-1)$-subconfiguration of $\VerSpace(X,k)$.
  Moreover, it is the intersection of three complements of the three maximal complete subgraphs $\msub_k(\{x,y\})$,
  $\{x,y\}\in\sub_2(X)$ of $\goth M$.
  As usually, we write $\triop$ for the (partial) binary operation `the third point on the line through'.
  Frequently, writing $x,yz,z$ below we mean any $x,y,z$ such that $X = \{x,y,z\}$.
  
  Next, let $Z = \{ 1,\ldots,k-1  \}$, then $|Z| = k-1$.
  For every $z \in X$ we define
  \begin{equation}
    \nu_z \colon Z \ni s \longmapsto x^s y^{k-s}, \text{  where } x \porz y,\; \{ z,y,z \} = X.
  \end{equation}
  So, $\goth M$ contains three copies: 
    $F_z = \msub_{k}(\{ x,y \})\setminus (\msub_k(\{ x,z \}) \cup \msub_k(\{ y,z \})) = \nu_z(Z)$ 
  ($X = \{ x,y,z \}$) 
  of $K_Z$.
  
  Next, for $z \in Z$ and distinct $i,j\in Z$ we define 
  \begin{equation}
    \mu_z(\{i,j\}) = \mu_z(i,j) = \nu_z(i) \triop \nu_z(j).
  \end{equation}
  It is easy to compute that 
    $\mu_z(i,j) = x^i y^i z^{k-2i}\in abc \VerSpace(X,k-3)$, 
  so we have defined  a surjection 
    $\mu_z \colon \sub_2(Z) \longrightarrow {\goth B}$.

  Finally, for distinct $x,y \in X$ and $s \in Z$ we define the map 
    $\xi_{x,y}\colon Z \longrightarrow Z$
  by the formula
  \begin{equation}
    \xi_{x,y}(s) = k-s
  \end{equation}
  and we set $\xi_{x,x} = \id$.
  The following holds for  $\{ x,y, z \} = X$ and $i,j \in Z$:
  \begin{equation*}
    \nu_x(i), \nu_y(j) \text{ collinear in } {\goth M} \iff j = \xi_{x,y}(i); \quad
    \nu_x(i) \triop \nu_y(k-i) = z^k.
  \end{equation*}
  So, in fact, for each $\{x,y,z\} = X$ we have a perspective 
    $\xi_{x,y} \colon F_x \longrightarrow F_y$ with the centre $z^k$
  determined by the formula 
    $\xi_{x,y}(\nu_x(i)) = \nu_y(\xi_{x,y}(i))$.
  Then 
    $\VerSpace(X,k) \cong 3 \bowtie_{\xi}^{\mu} {\goth B}$.

  Suppose that $\goth M$ contains a hyperplane $H$ of the form $\hipa(A, X \cup(Z\setminus A))$
  with $A \subset Z$.
  In view of \ref{prop:hipinSSP:2}, $A$ is a $(\mu,\xi)$-invariant subset of $Z$.
  Without loss of generality we can assume that $1 \in A$ and then $\{ 1,k-1 \} \subset A$.
%
%
  We get $\mu_z(1,k-1) = x y z^{k-2} = \mu_y(k-1,2)$; then $2\in A$, because $A$ is $\mu$-invariant
  (here, we make use of \ref{prop:hipinSSP:2}(\ref{dzhip3}'), in fact). 
  Consequently,
  $k-2 \in A$ as well.

  Step by step, we end up with $\{i,k-i\}\subset A$ for every $i\in Z$, so $A = Z$, which, 
  by \ref{prop:hipinSSP:0} and \ref{prop:hipinSSP:2} proves the theorem.
\end{proof}
%


\section{Ideas, hypotheses, and so on $\ldots$}

\subsection{Veldkamp space labeled}

As we see, the number of free subgraphs of a \BSTS\ $\goth M$ does not determine $\goth M$.
Also, the number of of its hyperplanes and the types of geometry on hyperplanes
do not determine $\goth M$.
Clearly, $\VSpace({\goth M})$ says only about $|\hipy({\goth M})|$.

Recall that if $\goth M$ is a $\binkonf(n,0)$-configuration with a free $K_{n-1}$-subgraph
then each hyperplane of $\goth M$ is either a $\binkonf(n,-1)$ or the union
of two unconnected $\binkonf(k_1,0)$ and $\binkonf(k_2,0)$-subconfigurations
of $\goth M$ with $k_1 + k_2 = n$, $k_1,k_2\geq 2$..
Suppose that for every $k < n$ we have the list 
  ${\cal M}_k$ of  $\binkonf(k,0)$-configurations.
Let ${\bf T}({\goth M})$ be $\VSpace({\goth M})$ with its points labelled by the 
types of respective hyperplanes, i.e. by symbols 
from ${\cal M}_{k-1}$ or unordered pairs of symbols from ${\cal M}_k\times{\cal M}_{n-k}$.
It seems that ${\bf T}({\goth M})$ may uniquely characterize $\goth M$.

\subsection{Problem}

In all the examples which were examined in the paper a hyperplane of a \BSTS\
(if exists) is either connected, and then it is a binomial maximal subspace, or it is 
the union of two unconnected (in a sense: mutually complementary) binomial subspaces.
Is this characterization valid for {\em arbitrary} \BSTS.


\bigskip

\par\noindent\small
Authors' address:\\
Krzysztof Petelczyc, 
Ma{\l}gorzata Pra{\.z}mowska, Krzysztof Pra{\.z}mowski\\
Institute of Mathematics, University of Bia{\l}ystok\\
K. Cio{\l}kowskiego 1M\\
15-245 Bia{\l}ystok, Poland\\
e-mail: \\
{\ttfamily kryzpet@math.uwb.edu.pl}, 
{\ttfamily malgpraz@math.uwb.edu.pl},
{\ttfamily krzypraz@math.uwb.edu.pl}

\end{document}